\DeclareMathOperator*{\Fix}{Fix}
\DeclareMathOperator*{\R}{Re}
\DeclareMathOperator*{\I}{Im}
\newcommand{\e}{\mathrm{e}}
\newcommand{\ii}{\mathrm{i}}
\newcommand{\dd}{\mathrm{d}}
\newcommand{\Ls}{L^1_\mathrm{s}}
\newcommand{\B}{\mathcal{B}}
\newcommand{\F}{\mathcal{F}}
\newcommand{\RR}{\mathbb{R}}
\newcommand{\CC}{\mathbb{C}}
\newtheorem{thm}{Theorem}[section]
\newtheorem{prp}[thm]{Proposition}
\newtheorem{lem}[thm]{Lemma}
\newtheorem{cor}[thm]{Corollary}
\theoremstyle{definition}
\newtheorem{rem}[thm]{Remark}
\numberwithin{equation}{section}
\begin{document}

\title{A Katznelson-Tzafriri theorem for measures}

\author{David Seifert}
\address{St John's College, St Giles, Oxford\;\;OX1 3JP, United Kingdom}
\email{david.seifert@sjc.ox.ac.uk}

\begin{abstract}
 This article generalises the well-known Katznelson-Tzafriri theorem for a $C_0$-semigroup $T$ on a Banach space $X$, by removing the assumption that a certain measure  in the original result be absolutely continuous. In an important special case the rate of decay is quantified in terms of the growth of the resolvent of the  generator of $T$. These results are closely related to ones obtained recently in the Hilbert space setting by Batty, Chill and Tomilov in \cite{BCT}. The main new idea is to incorporate an assumption on the non-analytic growth bound $\zeta(T)$  which  is equivalent to the assumption made in \cite{BCT} if $X$ is a Hilbert space.  
 \end{abstract}

\thanks{This work was completed with financial support from the EPSRC}
\subjclass{Primary 47D06; Secondary 34G10, 34D05.}
\keywords{$C_0$-semigroup; asymptotics; Katznelson-Tzafriri theorem; non-analytic growth bound; rate of decay}

\maketitle

\section{Introduction}\label{sec:intro}

The Katznelson-Tzafriri theorem  is a cornerstone of the asymptotic theory of operator semigroups. Given  a function $a\in L^1(\RR)$, define its Fourier transform $\F a$  by 
\begin{equation}\label{FT}(\F a)( s)=\int_{\RR}\e^{-\ii s t}a(t)\,\dd t 
\end{equation}
 for $s\in \RR$. Further, given a bounded $C_0$-semigroup $T$ on a complex Banach space $X$ and a function $a\in L^1(\RR_+)$, define the operator $\widehat{a}(T)$ for $x\in X$ by 
 \begin{equation}\label{op}
 \widehat{a}(T)x=\int_{\RR_+}a(t)T(t)x\,\dd t.
 \end{equation}
   The  version of the Katznelson-Tzafriri theorem that is of interest here can now be stated as follows; see \cite{ESZ}, \cite{Vu} for proofs.

\begin{thm}\label{KT_cont}
Let $X$ be a complex Banach space and let $T$ be a bounded $C_0$-semigroup on $X$ with generator $A$. Suppose that $\ii\sigma(A)\cap\RR$ is of spectral synthesis and let $a\in L^1(\RR_+)$ be such that $(\F a)(s)=0$ for all $s\in \ii\sigma(A)\cap\RR$. Then $\|T(t)\widehat{a}(T)\|\to0$ as $t\to\infty$.
\end{thm}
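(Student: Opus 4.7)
The plan is to adapt the classical strategy of Esterle-Strouse-Zouakia and Vu, combining approximation via spectral synthesis with a Tauberian decay estimate. Set $M := \sup_{t\geq 0}\|T(t)\|$ and $E := \ii\sigma(A)\cap\RR$, and note that $a \mapsto \widehat{a}(T)$ is a bounded algebra homomorphism from $L^1(\RR_+)$ into $B(X)$ with norm at most $M$.

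First I would approximate $\widehat{a}(T)$ in operator norm by operators $\widehat{b_n}(T)$ with the stronger property that $\widetilde{b_n}$ vanishes on a genuine complex neighbourhood of $\ii E$. Because elements of $L^1(\RR_+)$ give rise to $H^\infty$-type Laplace transforms, one cannot produce such $b_n$ directly at the level of an $L^1(\RR_+)$-norm approximation of $a$; the argument must be carried out in a richer algebraic setting, for instance via the discrete reduction to $V := T(1)$ in the Wiener algebra $A(\T)$ (using a suitable spectral mapping on the peripheral spectrum to identify $\sigma(V)\cap\T$), or via bounded measures on $\RR_+$ in the spirit of the present paper's setting. Spectral synthesis of $E$, transferred to the chosen framework, would then yield approximants with $\|\widehat{a}(T) - \widehat{b_n}(T)\| \to 0$, reducing matters to proving $\|T(t)\widehat{b_n}(T)\| \to 0$ for each fixed $n$.

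The second step is the analytic decay estimate for such approximants. For $b$ with $\widetilde{b}$ vanishing on a neighbourhood of $\ii E$, the operator-valued Laplace transform
\[
G(\lambda) := R(\lambda, A)\widehat{b}(T) = \int_0^\infty \e^{-\lambda t} T(t)\widehat{b}(T)\, \dd t,\quad \R\lambda > 0,
\]
extends holomorphically across $\ii\RR$: on $\ii\RR\setminus\ii\sigma(A)$ the extension comes from the resolvent, while on $\ii E \subseteq \ii\sigma(A)$ the zeros of $\widetilde{b}$ neutralise the singularities of $R(\lambda, A)$, which can be made explicit via a Fubini identity expressing $R(\lambda,A)\widehat{b}(T)$ in terms of $\widetilde{b}(-\lambda) R(\lambda, A)$ and a term $\widehat{c_\lambda}(T)$ with $c_\lambda(u) = \int_u^\infty \e^{\lambda(s-u)}b(s)\,\dd s$. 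An Ingham-type Tauberian theorem applied to the bounded, strongly continuous function $t \mapsto T(t)\widehat{b}(T)$ then delivers $\|T(t)\widehat{b}(T)\| \to 0$, completing the argument.

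The main obstacle is twofold. The half-line support constraint complicates the approximation step, ruling out any naive $L^1(\RR_+)$-norm reduction and forcing one either into a discrete framework or into a subtler continuous algebra. More seriously, Tauberian theorems delivering operator-norm (as opposed to merely strong) decay on a general Banach space are delicate: in Hilbert space one can close the gap via Plancherel, but in general additional input is required. This is precisely the role played, in the quantified extensions of the main results, by the non-analytic growth bound $\zeta(T)$ highlighted in the abstract.
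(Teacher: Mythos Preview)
The paper does not prove Theorem~\ref{KT_cont}; it is quoted as background with references to \cite{ESZ} and \cite{Vu}. The nearest thing to a proof in the paper is the treatment of the function $F$ in the proof of Theorem~\ref{KT_zeta}, and comparing your proposal with that argument exposes both an unnecessary detour and a gap.

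The argument for $F$ dissolves your ``half-line obstacle'' by simply not insisting on it: one extends $T$ by zero on $(-\infty,0)$, works in $L^1(\RR)$ rather than $L^1(\RR_+)$, and uses spectral synthesis to approximate $a$ by functions $a_n\in L^1(\RR)$ whose Fourier transforms vanish on a \emph{real} open neighbourhood of $E$ and, after a further density step, have compact support. Parseval's identity then gives
\[
\int_{\RR} a_n(s)\,T(t+s)\,\dd s=\frac{1}{2\pi}\int_{\RR}\e^{\ii st}(\F a_n)(-s)\,R(\ii s,A)\,\dd s,
\]
which is a Bochner integral in $\B(X)$, and the Riemann--Lebesgue lemma yields operator-norm decay directly. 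No Tauberian theorem, no discrete reduction to $T(1)$, no passage to a measure algebra is needed; the obstruction you build your proposal around is an artefact of restricting the approximants to $L^1(\RR_+)$.

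Your second step, as written, does not close. Spectral synthesis can only provide vanishing of $\F b$ on a real neighbourhood of $E$ (you note yourself that vanishing of the Laplace transform on a complex neighbourhood is impossible for nonzero $b\in L^1(\RR_+)$), and that is not enough for your factorisation to produce a holomorphic extension of $G(\lambda)=R(\lambda,A)\widehat{b}(T)$ across $\ii E$: in the term $\widetilde{b}(-\lambda)R(\lambda,A)$ the first factor is generically nonzero once $\lambda$ leaves $\ii\RR$, while the second is singular on $\sigma(A)$, so the ``zeros neutralise singularities'' heuristic fails. Your closing remark is also misplaced: Theorem~\ref{KT_cont} holds on every Banach space with no resolvent hypothesis whatsoever, so the non-analytic growth bound plays no role here; $\zeta(T)<0$ enters only in the measure version, Theorem~\ref{KT_zeta}, where it handles the high-frequency part $G$ that has no analogue when $\mu$ is absolutely continuous.
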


 Here a closed  subset $\Lambda$ of $\RR$ is said to be of \textsl{spectral synthesis} if any function $a\in L^1(\RR)$ such that $(\F a)(s)=0$ for all $s\in\Lambda$   can be approximated in $L^1$-norm by elements of  $L^1(\RR)$ whose Fourier transforms vanish in an open neighbourhood of $\Lambda$. In the case where the underlying space is a Hilbert space, the following generalisation of Theorem~\ref{KT_cont} was recently obtained in \cite[Theorem~6.14]{BCT}.
 
 \begin{thm}\label{BCT}
Let $X$ be a complex Hilbert space and let $T$ be a bounded $C_0$-semigroup on $X$ with generator $A$. Suppose that $\ii\sigma(A)\cap\RR$ is of spectral synthesis and that there exists $R>0$ such that $\{\ii s:|s|\geq R\}\subset\rho(A)$ and $\sup\{\|R(\ii s,A)\|:|s|\geq R\}<\infty$. Then, given any  bounded Borel measure $\mu$ on $\RR_+$ such that $(\F\mu)(s)=0$ for all $s\in \ii\sigma(A)\cap\RR$,  $\|T(t)\widehat{\mu}(T)\|\to0$ as $t\to\infty$.
\end{thm}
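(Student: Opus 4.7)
The natural strategy is to reduce Theorem~\ref{BCT} to its absolutely continuous predecessor Theorem~\ref{KT_cont} by regularising the measure~$\mu$. Pick a non-negative $\phi\in L^1(\RR_+)$ with $\int\phi=1$ and, for $\epsilon>0$, set $\phi_\epsilon(t)=\epsilon^{-1}\phi(t/\epsilon)$. Then the convolution $\mu_\epsilon:=\phi_\epsilon\ast\mu$ is an absolutely continuous measure on $\RR_+$ with density in $L^1(\RR_+)$, and $\F\mu_\epsilon=(\F\phi_\epsilon)(\F\mu)$ still vanishes on $\ii\sigma(A)\cap\RR$. Hence Theorem~\ref{KT_cont} yields, for each fixed $\epsilon>0$,
\[
\bigl\|T(t)\widehat{\mu_\epsilon}(T)\bigr\|\longrightarrow 0\quad\text{as }t\to\infty.
\]

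A Fubini computation gives $\widehat{\mu_\epsilon}(T)=\widehat{\phi_\epsilon}(T)\widehat{\mu}(T)$, so the proof reduces to the uniform estimate
\begin{equation}\label{unifcon}
\lim_{\epsilon\to 0}\sup_{t\geq 0}\bigl\|T(t)\bigl(I-\widehat{\phi_\epsilon}(T)\bigr)\widehat{\mu}(T)\bigr\|=0.
\end{equation}
Once \eqref{unifcon} is established, a standard $\delta/2$ argument concludes: given $\delta>0$, first choose $\epsilon$ so that the error in \eqref{unifcon} is at most $\delta/2$ uniformly in $t$, then choose $t_0$ so that $\|T(t)\widehat{\mu_\epsilon}(T)\|\leq\delta/2$ for $t\geq t_0$.

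The main obstacle is precisely \eqref{unifcon}, and this is the step in which the Hilbert space structure together with the resolvent hypothesis $\sup_{|s|\geq R}\|R(\ii s,A)\|<\infty$ must enter decisively; as indicated in the abstract, this assumption is equivalent in the Hilbert space setting to the non-analytic growth bound condition $\zeta(T)\leq 0$, which provides a form of norm-regularity of $T$ at infinity. Starting from the identity
\[
T(t)\bigl(I-\widehat{\phi_\epsilon}(T)\bigr)=\int_{\RR_+}\phi_\epsilon(s)\bigl(T(t)-T(t+s)\bigr)\,\dd s,
\]
I would estimate the operator norm through a Plancherel-type argument on the imaginary axis. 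After a smooth cut-off separating frequencies $|\xi|\leq R$ and $|\xi|\geq R$, the high-frequency part is controlled by the uniform resolvent bound together with the boundedness of $|1-\F\phi_\epsilon|$, and becomes uniformly small in $t$ once $\epsilon\to 0$; the low-frequency part is handled by the continuous functional calculus on a compact set and the local uniform convergence $\F\phi_\epsilon\to 1$ near the origin.

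The principal technical challenge is making these Fourier-inversion and resolvent manipulations rigorous on a Hilbert space with only a bounded (rather than exponentially stable) semigroup, and in particular ensuring that all estimates are independent of $t\geq 0$. This is exactly where the equivalence between the resolvent hypothesis and $\zeta(T)\leq 0$ is decisive, since it supplies the right quantitative control on how $T(t+s)$ compares to $T(t)$ in operator norm, averaged against the mollifier~$\phi_\epsilon$.
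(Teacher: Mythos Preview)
Your decomposition into an absolutely continuous part plus a remainder is exactly the shape of the paper's argument, but the mollification limit you propose does not work: your displayed uniform estimate
\[
\lim_{\epsilon\to 0}\sup_{t\geq 0}\bigl\|T(t)\bigl(I-\widehat{\phi_\epsilon}(T)\bigr)\widehat{\mu}(T)\bigr\|=0
\]
is false in general. Already at $t=0$ it requires $\widehat{\phi_\epsilon}(T)\widehat{\mu}(T)\to\widehat{\mu}(T)$ in operator norm. For a normal generator on $\ell^2$ with eigenvalues $\lambda_0=0$ and $\lambda_n=-1+\ii n$ for $n\ge1$ (which satisfies every hypothesis of the theorem) this fails: the $n$-th diagonal entry of $I-\widehat{\phi_\epsilon}(T)$ equals $1-\int_0^\infty\phi(s)\e^{\epsilon\lambda_n s}\,\dd s$, which tends to $1$ as $n\to\infty$ for each fixed $\epsilon>0$ by Riemann--Lebesgue, while for $\mu=e-\delta_0$ the diagonal entries of $\widehat{\mu}(T)=AR(1,A)$ have moduli tending to $1$. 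Hence $\|(I-\widehat{\phi_\epsilon}(T))\widehat{\mu}(T)\|$ is bounded below independently of $\epsilon$. Your assertion that the high-frequency part ``becomes uniformly small in $t$ once $\epsilon\to0$'' is therefore incorrect: $1-\F\phi_\epsilon$ does not vanish at infinity for any $\epsilon$, and the uniform resolvent bound furnishes only boundedness there, not smallness. (A minor point: the condition equivalent to the resolvent hypothesis is $\zeta(T)<0$, strictly.)

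The paper instead fixes a single cut-off $\varphi\in C_c^\infty(\RR)$ with $\varphi\equiv1$ near $\ii\sigma(A)\cap\RR$, writes $\mu=\nu+\xi$ with $\nu=\mu*\F^{-1}\varphi$ and $\xi=\mu*(\delta_0-\F^{-1}\varphi)$, and shows that \emph{each} of $F(t)=\int T(s+t)\,\dd\nu(s)$ and $G(t)=\int T(s+t)\,\dd\xi(s)$ tends to $0$ as $t\to\infty$; no limit in an auxiliary parameter is taken. The absolutely continuous piece $F$ is handled via spectral synthesis and Riemann--Lebesgue, much as you envisage. For the high-frequency piece $G$, the resolvent hypothesis (equivalently $\zeta(T)<0$ on Hilbert space, by the non-analytic Gearhart--Pr\"uss theorem) enters through a Fourier-multiplier characterisation: the map $s\mapsto(1-\varphi(-s))R(\ii s,A)$ is the Fourier transform of some $S\in \Ls(\RR;\B(X))$, whence $G(\cdot)x\in L^1(\RR;X)$ for every $x\in X$; the closed graph theorem then gives a uniform $L^1$ bound, and an elementary identity expressing $G(t)x$ as $t^{-1}$ times uniformly bounded quantities yields $\|G(t)\|\to0$.
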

 
 Here the Fourier transform $\F\mu$ and the operator $\widehat{\mu}(T)$ are defined, for any bounded Borel measure on $\RR$ and $\RR_+$, respectively, and for any bounded $C_0$-semigroup $T$, by formulas analogous to \eqref{FT} and \eqref{op}, with the measure $a(t)\dd t$ replaced by $\dd\mu(t)$ in each case. Thus Theorem~\ref{BCT} generalises Theorem~\ref{KT_cont} in the Hilbert space setting by no longer requiring the measure $\mu$ to be absolutely continuous with respect to Lebesgue measure. 
 
\begin{rem}
It is shown in \cite[Theorem~3.1]{Seifert} that the assumption of spectral synthesis can be dropped in Theorem~\ref{KT_cont} when $X$ is a Hilbert space.  It remains open whether this is true also in the setting of Theorem~\ref{BCT}.
\end{rem} 
 
The purpose of this article is to extend Theorem~\ref{BCT} to Banach spaces, and this is achieved in Theorem~\ref{KT_zeta} below. As it turns out, the condition in Theorem~\ref{BCT} on the resolvent operator  needs to be replaced in the Banach space case by a condition involving the so-called \textsl{non-analytic growth bound} $\zeta(T)$ of the $C_0$-semigroup $T$. This condition requires the semigroup to be close in a certain asymptotic sense to an analytic operator-valued function, and consequently a $C_0$-semigroup $T$ satisfying this condition will be said to be \textsl{asymptotically analytic}. When $X$ is a Hilbert space, this property  is equivalent to the  condition on the resolvent operator in Theorem~\ref{BCT}. The main new idea in this paper is to use a characterisation of analytic analyticity obtained in \cite{BBS}, which makes it possible  to extend Theorem~\ref{BCT} to the Banach space setting under this assumption.
 
 A particularly important instance of Theorem~\ref{BCT} arises when $\ii\sigma(A)\cap\RR=\{0\}$. In this case, the result shows that 
 \begin{equation}\label{decay}
\lim_{t\to\infty}\|T(t)AR(1,A)\|=0
\end{equation}
provided $\{\|R(\ii,s,A)\|:|s|\geq 1\}$ is bounded, as can be seen by choosing $\mu=e-\delta_0$. Here $\delta_0$ denotes the Dirac mass at 0 and the function $e$ is defined, for $t\geq0$, by $e(t)=\e^{-t}$. This result, which provides the main motivation both for the more general  Theorem~\ref{BCT} and for the present paper, is of especial interest in connection with the abstract Cauchy problem  associated with  $T$, namely
\begin{equation}\label{ACP}
\begin{cases}
\dot{u}(t)=Au(t), &\text{$t\geq0$,}\\
u(0)=x,
\end{cases}
\end{equation} 
where $x\in X$. The solution of \eqref{ACP} is given by $u(t)=T(t)x$ for all $t\geq0$. For general $x\in X$, this function $u:\RR_+\to X$ solves \eqref{ACP} only in the  mild sense, however if $x\in D(A)$ then $u$ is a classical solution of \eqref{ACP}. In particular, $u$ is then differentiable with derivative $\dot{u}(t)=T(t)Ax$ for $t\geq0$. Since the resolvent operator $R(1,A)$ maps $X$ onto $D(A)$, \eqref{decay} is simply another way of saying that classical solutions of \eqref{ACP} have derivatives which decay to zero uniformly for all $x\in D(A)$ with graph norm $\|x\|+\|Ax\|\leq1$. Examples of Cauchy problems in which $\ii\sigma(A)\cap\RR=\{0\}$ arise  naturally.  Consider for instance the problem $$\begin{aligned}
\frac{\partial^2 u}{\partial t^2}-\Delta u&=0,\qquad x\in\Omega,\; t>0,\\
\frac{\partial u}{\partial n}+\int_{0}^ta(t-s)\frac{\partial u}{\partial t} (x,s)\,\dd s&=0, \qquad x\in\partial\Omega, \;t>0,
\end{aligned}$$
where $\Omega$ is a bounded open subset of $\RR^n$ for some $n\geq1$ with sufficiently smooth boundary, $\frac{\partial u}{\partial n}$ denotes the outward normal derivative and $a$ denotes a sufficiently well-behaved function defined on $\RR_+$. If $u$ is interpreted as acoustic pressure, the equation can be understood as modelling the evolution of sound in a compressible medium with viscoelastic surface. It is shown in \cite{DFMP}  that the associated semigroup generator $A$ corresponding to this problem has boundary spectrum $\sigma(A)\cap\ii\RR\subset\{0\}$ and is in general non-empty; see also \cite{ABPZ}, \cite{AvTri13} and  \cite{DFMP2}. As is shown in  \cite[Theorem~6.15]{BCT}, it is possible to obtain estimates on the rate of decay in \eqref{decay} when $X$ is a Hilbert space. Theorem~\ref{KT_zeta_rates} below extends this result to the Banach space setting for the class of  asymptotically analytic semigroups. For further background material on the problem at hand, including its connection with a closely related problem having applications to damped wave equations, see \cite[Section~6]{BCT}.
 
 The new results in this paper are contained in Section~\ref{sec:main} below. First, Section~\ref{sec:prel} provides some context for these main results by providing an overview of some of the results obtained in \cite{BCT} concerning the rate of decay in the case where $\ii\sigma(A)\cap\RR=\{0\}.$ The paper concludes with some remarks and open questions, which are collected in Section~\ref{sec:rem}. The notation used throughout  is standard except where introduced explicitly. In particular, given a closed linear operator $A$ on a complex Banach space $X$, the domain of $A$ is denoted by $D(A)$, the spectrum and resolvent sets of $A$ are denoted by $\sigma(A)$ and $\rho(A)$, respectively, and for $\lambda\in\rho(A)$ the resolvent operator is written as $R(\lambda,A)=(\lambda-A)^{-1}$.

 \section{Background on the case of one-point boundary spectrum}\label{sec:prel}
 
Let $T$ be a bounded $C_0$-semigroup with generator $A$ on a complex Banach space $X$ and suppose that $\ii\sigma(A)\cap\RR=\{0\}$. This section collects together several results from \cite{BCT} which relate the rate of decay of $\|T(t)AR(1,A)\|$ as $t\to\infty$ to the behaviour of $\|R(\ii s,A)\|$ as $|s|\to0$. In order to make the relationship precise, it is convenient to have in place two pieces of non-standard notation. Thus a decreasing function $m:(0,1]\to(0,\infty)$   such that  $\|R(\ii s,A)\|\leq m(|s|)$ for $0<|s|\leq1$ will be said to be a \textsl{dominating function (for the resolvent of $A$)}. Likewise a decreasing function  $\omega:\RR_+\to(0,\infty)$  such that $\|T(t)AR(1,A)\|\leq\omega(t)$ for all $t\geq0$ will be said to be a \textsl{dominating function (for  $T$)}. The \textsl{minimal} dominating functions are given, for $s\in(0,1]$ and $t\geq0$, by 
\begin{equation}\label{min_df_cont}\begin{aligned}
m(s)&=\sup\big\{\|R(\ii r,A)\|:s\leq|r|\leq1\big\},\\  
\omega(t)&=\sup\big\{\|T(s)AR(1,A)\|:s\geq t\big\},
\end{aligned}\end{equation} 
respectively.  The function $m$ defined in \eqref{min_df_cont} is continuous and in what follows the same will be assumed to be true of any dominating function $m$. In particular, any such dominating function $m$ possesses a right-inverse $m^{-1}$ defined on the range of $m$. On the other hand, given a dominating function $\omega$ for $T$ such that $\omega(t)\to0$ as $t\to\infty$, let the function $\omega^*:(0,\infty)\to\RR_+$ be given by \begin{equation}\label{w_inv_cont}\omega^*(s)=\min\big\{t\geq0:\omega(t)\leq s\big\}.\end{equation} Then $\omega(\omega^*(s))\leq s$ for all $s>0$, with equality for all $s$ in the range of $\omega$. 

For $T$ and $A$ as above, it follows from the elementary properties of resolvent operators and the fact that $0\in\sigma(A)$ that $m(s)\geq s^{-1}$ for any dominating function $m$ for the resolvent of $A$. As is shown in \cite[Example~6.7]{BCT} by means of a simple direct sum construction,  dominating functions $\omega$ for $T$ need not satisfy any such lower bound and indeed can decay arbitrarily slowly, even for semigroups of normal operators on Hilbert space. On the other hand, the following result shows that cases in which a dominating function $\omega$ for $T$ decays faster than $t^{-1}$ must be precisely of this type involving a direct sum. The proof, which relies on a simple spectral splitting argument combined with a result from \cite{KMSOT}, can be found in \cite[Theorem~6.9]{BCT}.

\begin{thm}\label{max_decay_cont}
Let $X$ be a complex Banach space and let $T$ be a  $C_0$-semigroup on $X$ with generator $A$. Suppose that $\ii\sigma(A)\cap\RR=\{0\}$. Then either $$\limsup_{t\to\infty}t\|T(t)AR(1,A)\|>0$$ or there exist closed $T$-invariant subspaces $X_0$ and $X_1$ of $X$ such that $X_0\subset\Fix(T)$, the restriction $A_1$ of $A$ to $X_1$ is invertible, and $X=X_0\oplus X_1$.
\end{thm}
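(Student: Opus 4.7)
The plan is to argue by dichotomy: assume that the first alternative fails, i.e.\ that $t\|T(t)AR(1,A)\|\to0$ as $t\to\infty$, and from this deduce the existence of the desired splitting. The whole strategy rests on the fact, extracted from the cited paper \cite{KMSOT}, that an $o(1/t)$ decay rate of $\|T(t)AR(1,A)\|$ is incompatible with $0$ being anything other than an isolated point of $\sigma(A)$ and, in fact, a first-order pole of the resolvent $R(\cdot,A)$. Under the standing assumption $\ii\sigma(A)\cap\RR=\{0\}$, the imaginary axis contains no other spectral points near $0$, so the input from \cite{KMSOT} upgrades this to genuine isolation in $\CC$ and simplicity of the pole. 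This step is where all the real work happens; once it is in hand, what follows is a standard application of the Riesz functional calculus.

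Given that $0$ is an isolated first-order pole of $R(\cdot,A)$, pick a small positively oriented circle $\Gamma\subset\rho(A)$ around $0$ enclosing no other point of $\sigma(A)$, and form the Riesz projection
\begin{equation*}
P=\frac{1}{2\pi\ii}\int_\Gamma R(\lambda,A)\,\dd\lambda.
\end{equation*}
Set $X_0=\Ran P$ and $X_1=\Ker P$, so that $X=X_0\oplus X_1$. Both subspaces are closed, and since $P$ commutes with $T(t)$ (as $P$ lies in the bicommutant of $A$), both are $T$-invariant and also $A$-invariant. By the standard Riesz decomposition theorem the spectrum of $A_1=A|_{X_1}$ equals $\sigma(A)\setminus\{0\}$, so $0\in\rho(A_1)$ and $A_1$ is invertible, as required. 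The fact that the pole is of first order means that $\Ran P=\Ker A$: in general $\Ran P=\Ker A^k$ for a pole of order $k$, and here $k=1$. Therefore every $x\in X_0$ satisfies $Ax=0$, and then $T(t)x=x$ for all $t\geq0$, which gives $X_0\subset\Fix(T)$.

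The main obstacle, and indeed the only genuinely nontrivial ingredient, is the spectral-gap fact invoked at the very beginning: converting the quantitative decay $\|T(t)AR(1,A)\|=o(1/t)$ into the qualitative conclusion that $0$ is isolated in $\sigma(A)$ and is a simple pole. Everything downstream is a clean piece of spectral theory for closed operators, but this isolation statement is a deep fact about the Katznelson--Tzafriri critical rate and is precisely what is supplied by the result from \cite{KMSOT}. Once one is prepared to cite that result, the present theorem becomes a short corollary.
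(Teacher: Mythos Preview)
Your proposal is correct and matches the approach the paper itself describes: the paper does not give a proof but refers to \cite[Theorem~6.9]{BCT}, summarising it as ``a simple spectral splitting argument combined with a result from \cite{KMSOT}'', which is precisely your strategy of invoking \cite{KMSOT} to obtain that $0$ is an isolated simple pole and then applying the Riesz projection. Your honest acknowledgement that the nontrivial content lies entirely in the cited \cite{KMSOT} step is in line with the paper's own treatment.
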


The following result provides an estimate on the size of $\|R(\ii s,A)\|$ for small and large values of $|s|$; see \cite[Theorem~6.10]{BCT} for a proof.

\begin{thm}\label{res_bd}
Let $X$ be a complex Banach space and let $T$ be a bounded $C_0$-semigroup on $X$ with generator $A$. Suppose $\omega$ is a dominating function for $T$ such that $\omega(t)\to0$ as $t\to\infty$, and let $\omega^*$ be defined as in \eqref{w_inv_cont}. Then $\ii\sigma(A)\cap\RR\subset\{0\}$, $\sup\{\|R(\ii s,A)\|:|s|\geq1\}<\infty$ and, for any $c\in (0,1)$, $$\|R(\ii s,A)\|=O\left(\frac{1}{|s|}+\omega^*(cs)\right)$$ as $s\to0$.
\end{thm}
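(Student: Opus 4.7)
The proof splits into three parts: the spectral inclusion, the uniform bound for $|s|\geq 1$, and the sharp quantitative estimate as $s\to 0$. I would prove the inclusion $\ii\sigma(A)\cap\RR\subset\{0\}$ by an approximate-eigenvector argument: boundedness of $T$ forces $\sigma(A)\subset\{\R\lambda\leq 0\}$, so any $\ii s\in\sigma(A)$ with $s\neq 0$ lies on $\partial\sigma(A)\subset\sigma_{\mathrm{ap}}(A)$. Choosing $y_n\in D(A)$ with $\|y_n\|=1$ and $(\ii s-A)y_n\to 0$, the standard identity $T(t)y_n-\e^{\ii st}y_n=-\int_0^t\e^{\ii s(t-u)}T(u)(\ii s-A)y_n\,\dd u$ gives $T(t)y_n-\e^{\ii st}y_n\to 0$ for each fixed $t\geq 0$. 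A short calculation using $y_n=(1-\ii s)R(1,A)y_n+R(1,A)(\ii s-A)y_n$ shows $AR(1,A)y_n-\tfrac{\ii s}{1-\ii s}y_n\to 0$, so commutativity of $T(t)$ with $AR(1,A)$ yields $\|T(t)AR(1,A)y_n\|\to|s|/\sqrt{1+s^2}$ for each $t\geq 0$, contradicting $\|T(t)AR(1,A)y_n\|\leq\omega(t)\to 0$.

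For the quantitative bounds, I would use the algebraic identity $AR(1,A)=R(1,A)-I$, which combined with $(\lambda-A)R(\lambda,A)=I$ yields, for $\lambda\in\rho(A)\setminus\{0\}$,
\[\lambda R(\lambda,A)=R(1,A)+(1-\lambda)R(\lambda,A)AR(1,A).\]
This reduces matters to estimating $\|R(\ii s,A)AR(1,A)\|$. For $\R\lambda>0$, the Laplace formula combined with the semigroup identity $\int_\tau^\infty\e^{-\lambda t}T(t)AR(1,A)\,\dd t=\e^{-\lambda\tau}T(\tau)R(\lambda,A)AR(1,A)$ gives
\[R(\lambda,A)AR(1,A)=\int_0^\tau\e^{-\lambda t}T(t)AR(1,A)\,\dd t+\e^{-\lambda\tau}T(\tau)R(\lambda,A)AR(1,A),\]
which, by continuity and the first step, extends to $\lambda=\ii s$ for $s\neq 0$. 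Commutativity then gives $\|\e^{-\ii s\tau}T(\tau)R(\ii s,A)AR(1,A)\|\leq\omega(\tau)\|R(\ii s,A)\|$.

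The crucial remaining step is a sharp bound on the finite integral. Using $AR(1,A)y=R(1,A)y-y$ to split the integrand and integrating the $R(1,A)y$-piece by parts---justified since $R(1,A)y\in D(A)$ and $\tfrac{\dd}{\dd t}T(t)R(1,A)y=T(t)R(1,A)y-T(t)y$---one obtains the identity
\[(1-\ii s)\int_0^\tau\e^{-\ii st}T(t)AR(1,A)y\,\dd t=\ii s\int_0^\tau\e^{-\ii st}T(t)y\,\dd t+\e^{-\ii s\tau}T(\tau)R(1,A)y-R(1,A)y,\]
leading to the operator-norm bound $\|\int_0^\tau\e^{-\ii st}T(t)AR(1,A)\,\dd t\|_{X\to X}\leq C(1+|s|\tau)$ with $C$ independent of $s$ and $\tau$. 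Combining the three displays and choosing $\tau$ so that $|1-\ii s|\omega(\tau)/|s|$ is bounded below $1$ yields, after rearrangement, both the uniform bound on $\|R(\ii s,A)\|$ for $|s|\geq 1$ (pick $\tau$ with $\omega(\tau)$ smaller than a fixed absolute constant) and, via the choice $\tau=\omega^*(c|s|)$ with any $c\in(0,1)$ for $|s|$ sufficiently small, the estimate $\|R(\ii s,A)\|=O(|s|^{-1}+\omega^*(c|s|))$. The main obstacle is identifying the correct integration-by-parts step: the crude bound $\|\int_0^\tau\e^{-\ii st}T(t)AR(1,A)\,\dd t\|\leq\int_0^\tau\omega(t)\,\dd t$ is off by a factor of $|s|^{-1}$ and would produce only the weaker estimate $\|R(\ii s,A)\|=O(|s|^{-1}+\omega^*(c|s|)/|s|)$.
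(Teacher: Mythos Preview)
The paper does not give its own proof of this theorem; it simply quotes the result from \cite[Theorem~6.10]{BCT} and refers the reader there. So there is no in-paper argument to compare against.

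Your proof is correct and self-contained. The approximate-eigenvector argument for the spectral inclusion is standard and works exactly as you describe. The core algebraic identity $\lambda R(\lambda,A)=R(1,A)+(1-\lambda)R(\lambda,A)AR(1,A)$ is right (it follows from the resolvent equation together with $AR(1,A)=R(1,A)-I$), and the truncated Laplace representation extends to $\lambda=\ii s$ by analytic continuation once $\ii s\in\rho(A)$ is known. The integration-by-parts step is the correct refinement, and your displayed identity
\[
(1-\ii s)\int_0^\tau\e^{-\ii st}T(t)AR(1,A)y\,\dd t=\ii s\int_0^\tau\e^{-\ii st}T(t)y\,\dd t+\e^{-\ii s\tau}T(\tau)R(1,A)y-R(1,A)y
\]
checks out; it yields the $O(1+|s|\tau)$ bound needed to close the estimate after absorbing the term $\lvert1-\ii s\rvert\,\omega(\tau)\|R(\ii s,A)\|$ into the left-hand side. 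Your choices of $\tau$ (a fixed value with $\omega(\tau)$ small for $|s|\ge1$, and $\tau=\omega^*(c|s|)$ for small $|s|$) then give both conclusions. One cosmetic point: the statement in the paper writes $\omega^*(cs)$, but your $\omega^*(c|s|)$ is the intended reading since $\omega^*$ takes positive arguments.
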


This result in turn leads to the following bound on how fast the quantity $\|T(t)AR(1,A)\|$ can decay as $t\to\infty$; see \cite[Corollary~6.11]{BCT}.

\begin{cor}\label{cor:lb_cont}
Let $X$ be a complex Banach space and let $T$ be a bounded $C_0$-semigroup on $X$ with generator $A$. Suppose that $\ii\sigma(A)\cap\RR=\{0\}$ and that   \begin{equation}\label{L_cont}\lim_{s\to0}\max\big\{\|s R(\ii s,A)\|,\|s R(-\ii s,A)\|\big\}=\infty,\end{equation} and let  $m$ be the minimal dominating function for the resolvent of $A$ defined in \eqref{min_df_cont}. Then, given any right-inverse $m^{-1}$ of $m$, there exist constants $c,C>0$  such that \begin{equation*}\label{KT_lb_cont}\|T(t)AR(1,A)\|\geq c m^{-1}(Ct)\end{equation*} for all sufficiently large $t\geq0$.
\end{cor}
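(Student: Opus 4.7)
The plan is to combine the asymptotic resolvent bound from Theorem~\ref{res_bd} with the hypothesis \eqref{L_cont} in order to extract a quantitative lower bound on the minimal dominating function for $T$, and then convert this into a pointwise lower bound on $\|T(t)AR(1,A)\|$ by exploiting the boundedness of the semigroup.

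First, let $\omega$ denote the minimal dominating function for $T$ given by \eqref{min_df_cont}, and observe that we may assume $\omega(t)\to0$ as $t\to\infty$ (otherwise $\|T(t)AR(1,A)\|$ is bounded away from zero and the desired inequality is trivial). Fix some $c_0\in(0,1)$. By Theorem~\ref{res_bd} applied to this $\omega$, there exist constants $K>0$ and $s_0>0$ such that $\|R(\ii s,A)\|\le K(1/|s|+\omega^*(c_0|s|))$ for $0<|s|\le s_0$. Since both $r\mapsto 1/r$ and $r\mapsto\omega^*(c_0 r)$ are decreasing, taking the supremum over $s\le|r|\le 1$ yields
\[ m(s)\le K\bigl(1/s+\omega^*(c_0 s)\bigr),\qquad 0<s\le s_0. \]

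Next, the hypothesis \eqref{L_cont} forces $s\,m(s)\to\infty$ as $s\to0^+$ (since $m(s)\ge\max\{\|R(\ii s,A)\|,\|R(-\ii s,A)\|\}$), so there exists $s_1\in(0,s_0]$ with $K/s\le m(s)/4$ for $0<s\le s_1$. Substituting back and rearranging gives $\omega^*(c_0 s)\ge m(s)/(2K)$ on $(0,s_1]$. Now given any right-inverse $m^{-1}$ of $m$ and any sufficiently large $t\ge0$, set $s=m^{-1}(4Kt)$; since $m(s)\to\infty$ as $s\to 0^+$, this $s$ lies in $(0,s_1]$ once $t$ is large enough. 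Then $\omega^*(c_0 s)\ge m(s)/(2K)=2t>t$, and by the definition \eqref{w_inv_cont} of $\omega^*$ as the smallest $t'$ for which $\omega(t')\le c_0 s$, this forces $\omega(t)>c_0 s=c_0 m^{-1}(4Kt)$.

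The final step is to transfer this lower bound from $\omega$ to $\|T(t)AR(1,A)\|$ itself. Writing $M=\sup_{\tau\ge0}\|T(\tau)\|<\infty$, the semigroup law gives $\|T(t+\tau)AR(1,A)\|=\|T(\tau)T(t)AR(1,A)\|\le M\|T(t)AR(1,A)\|$ for all $\tau\ge 0$, so $\omega(t)\le M\|T(t)AR(1,A)\|$. Combining this with the previous paragraph yields $\|T(t)AR(1,A)\|\ge(c_0/M)\,m^{-1}(4Kt)$ for all sufficiently large $t$, which is the desired conclusion with $c=c_0/M$ and $C=4K$. The one delicate point---and the main place where care is needed---is the translation of the inequality $\omega^*(c_0 s)\ge m(s)/(2K)$ for the generalised inverse $\omega^*$ back into an estimate on $\omega(t)$ at a specific time $t$; the modest slack introduced by choosing $s=m^{-1}(4Kt)$ rather than $m^{-1}(2Kt)$ turns the non-strict inequality into a strict one and sidesteps this issue.
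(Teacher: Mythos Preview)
Your argument is correct and follows essentially the route the paper indicates, deriving the corollary from Theorem~\ref{res_bd} (the paper simply cites \cite[Corollary~6.11]{BCT}): bound $m(s)$ via Theorem~\ref{res_bd}, use \eqref{L_cont} to absorb the $1/s$ term, invert to get a lower bound on $\omega$, and then pass from $\omega(t)$ to $\|T(t)AR(1,A)\|$ via the semigroup estimate $\omega(t)\le M\|T(t)AR(1,A)\|$. The only minor slip is that the supremum defining $m(s)$ runs over $s\le|r|\le 1$ while your pointwise resolvent bound is only asserted for $|r|\le s_0$; since $\|R(\ii r,A)\|$ is uniformly bounded for $s_0\le|r|\le 1$, this is absorbed by enlarging $K$ (or shrinking $s_0$), and the remainder of the proof is unaffected.
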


\begin{rem}
A similar argument to that used in \cite[Theorem~6.10]{BCT} shows that, given any constant $K>M$, where $M$ is as above, there exists $c\in(0,1)$ such that \begin{equation*}\label{eq_cont}\|R(\ii s,A)\|\leq K\left(\frac{1}{|s|}+\omega^*(c|s|)\right)\end{equation*} whenever $|s|$ is sufficiently small. Thus the conclusion \eqref{cor:lb_cont} in fact remains true if \eqref{L_cont} is replaced by the weaker condition that $L>M$, where $M$ is as  above  and $$L=\liminf_{s\to0}\max\big\{\|s R(\ii s,A)\|,\|s R(-\ii s,A)\|\big\}.$$ Taking $A=0$  shows that the conclusion can become false when $L=M$; see also \cite[Remark~6.12]{BCT}
\end{rem}

The next result shows that  it is not necessarily possible even for semigroups of normal operators on a Hilbert space to obtain a corresponding upper bound in terms of $m^{-1}$ for the quantity $\|T(t)AR(1,A)\|$ as $t\to\infty$; see \cite[Theorem~6.13]{BCT} for a proof.

\begin{thm}\label{normal_cont} Let $X$ be a complex Hilbert space and let $T$ be a bounded $C_0$-semigroup of normal operators on $X$ with generator $A$. Suppose  that $\ii\sigma(A)\cap\RR=\{0\}$ and that $\sup\{\|R(\ii s,A)\|:|s|\geq1\}<\infty$. Furthermore, let $m$ be the minimal dominating function for the resolvent of $A$ and let $m^{-1}$ be any right-inverse of $m$. Then, given any constant $c>0$,  $$\|T(t)AR(1,A)\|\leq O(m^{-1}(ct))$$ as $t\to\infty$ if and only if there exists a constant $C>0$ such that $$\frac{m(s)}{m(t)}\geq c\log\left(\frac{t}{s}\right)-C$$ for $0< s,t\leq1$.
\end{thm}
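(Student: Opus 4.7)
The plan is to use the spectral theorem to reduce the operator norm statements to pointwise estimates on $\sigma(A)$. Since $T$ is a bounded semigroup of normal operators, $A$ is normal with $\sigma(A) \subseteq \{\R\lambda \leq 0\}$, and the functional calculus yields
$$\|R(\ii s, A)\| = \frac{1}{\dist(\ii s, \sigma(A))}, \qquad \|T(t) A R(1,A)\| = \sup_{\lambda \in \sigma(A)} \Bigl|e^{t\lambda}\, \tfrac{\lambda}{1-\lambda}\Bigr|.$$
Hence $m(r) = \sup\{1/\dist(\ii s, \sigma(A)) : r \leq |s| \leq 1\}$, and for $\lambda = \alpha + \ii\beta \in \sigma(A)$ with $|\beta| \in (0,1]$ the trivial inequality $|\alpha| \geq \dist(\ii\beta, \sigma(A))$ gives $|\alpha| \geq 1/m(|\beta|)$, while $0 \in \sigma(A)$ yields $m(|\beta|) \geq 1/|\beta|$.

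For the direction ``growth condition on $m$ implies decay'', I would split $\sigma(A)$ into the region where $|\beta| \in (0, 1]$ and the rest. In the second region, the hypothesis $\sup\{\|R(\ii s, A)\| : |s| \geq 1\} < \infty$ forces $|\alpha|$ to be bounded below, yielding exponential decay in $t$, which is subsumed by $O(m^{-1}(ct))$ because $m^{-1}(ct) \geq 1/(ct)$. In the first region, after optimising in $|\alpha|$ one obtains the key estimate $|e^{t\lambda} \lambda/(1-\lambda)| \lesssim |\beta|\, e^{-t/m(|\beta|)} + O(1/t)$. Setting $s = m^{-1}(ct)$, the inequality $|\beta| e^{-t/m(|\beta|)} \leq e^{C/c} s$ is trivial for $|\beta| \leq s$, while for $|\beta| > s$ the hypothesis $m(s)/m(|\beta|) \geq c\log(|\beta|/s) - C$ rearranges directly into $t/m(|\beta|) \geq \log(|\beta|/s) - C/c$, which gives the bound.

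For the converse direction, given $0 < s < t \leq 1$ I would select $r \in [t, 1]$ nearly maximising $1/\dist(\ii r', \sigma(A))$ over $r' \in [t, 1]$, and then $\lambda = \alpha + \ii\beta \in \sigma(A)$ nearly attaining $\dist(\ii r, \sigma(A)) = 1/m(t)$, so that $|\alpha|$ and $|r - \beta|$ are of order $1/m(t)$. Applying the decay hypothesis at time $T = m(s)/c$ yields $e^{-T/m(t)}\, |\lambda/(1-\lambda)| \lesssim K m^{-1}(cT) = Ks$; taking logarithms and using $|\beta| \gtrsim r \geq t$ rearranges this to $m(s)/m(t) \geq c\log(t/s) - C'$ for some constant $C'$.

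The main technical obstacle lies in the converse direction, in ensuring that $|\beta|$ is comparable to $r$ rather than much smaller: via $|\beta| \geq r - 1/m(t)$, this holds as soon as $m(t) \geq 2/r$. If the supremum defining $m(t)$ is attained where $\dist(\ii r, \sigma(A)) \approx r$, the closest spectral point could lie near $0$ rather than near $\ii r$, and the argument degenerates. One resolves this by a case split: when $m(t) \leq 2/t$, the growth inequality $m(s)/m(t) \geq c\log(t/s) - C$ reduces to the trivially true $t/s \gtrsim c\log(t/s) - C$ using $m(s) \geq 1/s$; when $m(t) > 2/t$, the choice $r = t$ already gives $|\beta| \geq t/2$ and the logarithmic argument above goes through. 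A minor additional point is that $m^{-1}(ct)$ must be defined, which holds for all sufficiently large $t$ by continuity and unboundedness of $m$.
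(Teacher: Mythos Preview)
The paper does not actually prove this theorem; it is quoted from \cite[Theorem~6.13]{BCT} without argument, so there is no in-paper proof to compare against. Your proposal is the natural approach and is essentially correct: for a normal generator the spectral theorem reduces both $m$ and $\|T(t)AR(1,A)\|$ to explicit suprema over $\sigma(A)$, and the problem becomes an elementary two-variable inequality relating $|\beta|e^{-t/m(|\beta|)}$ to $m^{-1}(ct)$, which is exactly what the logarithmic growth condition encodes.

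Two minor points worth tightening. First, your split ``$|\beta|\in(0,1]$ versus the rest'' should also cover $\beta=0$, $\alpha<0$; here the resolvent bound for $|s|\ge1$ gives no lower bound on $|\alpha|$, but the crude estimate $e^{t\alpha}|\alpha|\le 1/(et)$ already yields $O(1/t)$, which is absorbed by $O(m^{-1}(ct))$. Second, in the converse direction you can drop the words ``nearly'': since $r\mapsto \dist(ir,\sigma(A))$ is continuous and positive on the compact set $\{t\le|r|\le1\}$ and $\sigma(A)$ is closed, both the supremum defining $m(t)$ and the distance to $\sigma(A)$ are attained. Taking exact maximisers gives $|\alpha|\le 1/m(t)$ on the nose, and hence the inequality $m(s)/m(t)\ge c\log(t/s)-C$ with the correct constant $c$ rather than $c/(1+\varepsilon)$. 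Your case split on $m(t)\le 2/t$ versus $m(t)>2/t$ is the right way to handle the possibility that the nearest spectral point to $ir$ sits near the origin; in the first case $m(s)/m(t)\ge t/(2s)$ dominates $c\log(t/s)-C$ for a suitable $C$, while in the second case $\beta\ge r-1/m(t)\ge t/2$ and the logarithmic computation goes through.
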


\begin{rem}
For analogues of the above results in the discrete setting of the classical Katznelson-Tzafriri theorem see \cite{Seifert2}.
\end{rem}

\section{Main results}\label{sec:main}

The objective now is to establish an upper bound on  $\|T(t)AR(1,A)\|$ as $t\to\infty$, where $T$ is a bounded $C_0$-semigroup on a complex Banach space $X$ whose generator $A$ satisfies $\ii \sigma(A)\cap\RR=\{0\}$. A result of this type will be obtained in Theorem~\ref{KT_zeta_rates} below, providing a counterpart to Corollary~\ref{cor:lb_cont}. As in \cite{BCT}, Theorem~\ref{KT_zeta_rates} will follow from a more general result by refining its proof in the special case at hand. This more general result is Theorem~\ref{KT_zeta} below, which is analogous to Theorem~\ref{BCT} but does not require $X$ to be a Hilbert space. On the other hand, the assumption on the resolvent operator that appears in the statement of Theorem~\ref{BCT} needs to be modified, and this requires some further notation.

Given a complex Banach space $X$ and a  set $\Omega\subset\CC$, a function $S:\Omega\to\B(X)$ will be said to be \textsl{exponentially bounded} if there exist constants $C\geq0$ and $\omega\in\RR$ such that $\|S(\lambda)\|\leq C\e^{\omega |\lambda|}$ for all $\lambda\in \Omega$. The space of all exponentially bounded holomorphic $\B(X)$-valued functions on $\Omega$ will be denoted by $H(\Omega;\B(X))$. Further, given any exponentially bounded function $S$  defined on $(0,\infty)$, let its \textsl{growth bound} $\omega_0(S)$ be given by $$\omega_0(S)=\inf\big\{\omega\in\RR: \|S(t)\|\leq M\e^{\omega t}\;\mbox{for some $M\geq1$ and all $t>0$}\big\}.$$ The \textsl{non-analytic growth bound} $\zeta(T)$ of a $C_0$-semigroup $T$ is defined as 
\begin{equation}\label{zeta_def}
\zeta(T)=\inf\big\{\omega_0(T-S):S\in H(\Sigma_\theta;\B(X))\;\mbox{for some $\theta>0$}\big\},
\end{equation}
 where $\Sigma_\theta=\{\lambda\in\CC\backslash\{0\}:|\arg(\lambda)|<\theta\}$. Thus the non-analytic growth bound measures the degree to which $T$ can, or rather \textsl{cannot}, be approximated asymptotically by  exponentially bounded analytic functions defined on certain sectors. It is clear that $\zeta(T)\leq \omega_0(T)$ for any $C_0$-semigroup $T$ and that $\zeta(T)=-\infty$ when $T$ is analytic. It is shown in \cite[Theorem~5.7]{BBS} that $\zeta(T)=-\infty$ also if $T$ is eventually differentiable or if $T$ has an $L^p$-resolvent for some $p\in(1,\infty)$, in the sense that there exist $\alpha\in\RR$ and $\beta\geq0$ such that $\{\lambda\in\CC:\R\lambda\geq\alpha, |\I\lambda|\geq\beta\}\subset\rho(A)$ and $$\int_{|s|\geq \beta}\|R(\alpha+\ii s,A)\|^p\,\dd s<\infty.$$ Furthermore, if $$\omega_{\mathrm{ess}}(T)=\inf\big\{\omega\in\RR: \|T(t)\|_{\mathrm{ess}}\leq M\e^{\omega t}\;\mbox{for some $M\geq1$ and all $t\geq0$}\big\}$$
  denotes the \textsl{essential growth bound} of $T$, where for $Q\in \B(X)$ $$\|Q\|_{\mathrm{ess}}=\inf\{\|Q-K\|:K\in\B(X)\;\mbox{is compact}\},$$then $\zeta(T)\leq\omega_{\mathrm{ess}}(T)$; see \cite[Proposition~5.3]{BBS}. A $C_0$-semigroup $T$ will be said to be \textsl{asymptotically analytic} if $\zeta(T)<0$. 
 
 Given $\alpha\in\RR$ and $\beta\geq0$, let $Q_{\alpha,\beta}=\{\lambda\in\CC:\R\lambda\geq\alpha, |\I\lambda|\geq\beta\}$ and, for any semigroup generator $A$, let $$\begin{aligned}s_0^\infty(A)=\inf\big\{\alpha\in\RR: & \;Q_{\alpha,\beta}\subset\rho(A)\;\mbox{and $\|R(\lambda,A)\|$ is uniformly}\\& \;\mbox{bounded on $Q_{\alpha,\beta}$ for some $\beta\geq0$}\big\}.\end{aligned}$$ It is shown in \cite[Proposition~2.4]{BBS} that $s_0^\infty(A)\leq \zeta(T)$. When $X$ is a Hilbert space, the following non-analytic analogue of the Gearhart-Pr\"uss theorem holds; see  \cite[Example~3.12]{BS}.

\begin{thm}\label{NA_GP} 
Let $X$ be a complex Hilbert space and let $T$ be a $C_0$-semigroup on $X$ with generator $A$. Then $s_0^\infty(A)= \zeta(T)$. 
\end{thm}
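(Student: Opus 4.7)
The plan is to establish the reverse inequality $\zeta(T)\leq s_0^\infty(A)$, since the opposite direction $s_0^\infty(A)\leq\zeta(T)$ holds on any Banach space by \cite[Proposition~2.4]{BBS}, as recalled in the excerpt. Fix $\alpha>s_0^\infty(A)$; by definition of $s_0^\infty(A)$ there exist $\beta\geq0$ and $M\geq0$ with $Q_{\alpha,\beta}\subset\rho(A)$ and $\|R(\mu,A)\|\leq M$ throughout $Q_{\alpha,\beta}$. The target is to exhibit a function $S\in H(\Sigma_\theta;\B(X))$ for some $\theta>0$ such that $\omega_0(T-S)\leq\alpha$; letting $\alpha\downarrow s_0^\infty(A)$ will then yield $\zeta(T)\leq s_0^\infty(A)$.

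Choose $\gamma>\omega_0(T)$ and start from the inverse Laplace representation
$$T(t)x=\frac{1}{2\pi\ii}\int_{\gamma-\ii\infty}^{\gamma+\ii\infty}\e^{\mu t}R(\mu,A)x\,\dd\mu,\qquad x\in D(A^2),\;t>0.$$
Since the rectangle $\{\mu:\alpha\leq\R\mu\leq\gamma,\;|\I\mu|\geq\beta\}$ is contained in $Q_{\alpha,\beta}$ and the resolvent is uniformly bounded there, I would deform the vertical contour into a compact \emph{low-frequency} part $\Gamma_{\mathrm{low}}$, consisting of the vertical segment $[\gamma-\ii\beta,\gamma+\ii\beta]$ together with the two horizontal segments $[\alpha\pm\ii\beta,\gamma\pm\ii\beta]$, and the unbounded vertical \emph{high-frequency} part $\Gamma_{\mathrm{high}}=\{\alpha+\ii s:|s|\geq\beta\}$. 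Define
$$S(z)=\frac{1}{2\pi\ii}\int_{\Gamma_{\mathrm{low}}}\e^{\mu z}R(\mu,A)\,\dd\mu\quad(z\in\CC),\qquad U(t)=T(t)-S(t).$$
Compactness of $\Gamma_{\mathrm{low}}$ together with the continuity of $R(\cdot,A)$ on it imply that $S$ is entire in $z$ with $\|S(z)\|\leq C\e^{\gamma|z|}$, so $S\in H(\Sigma_\theta;\B(X))$ for every $\theta>0$. The same contour deformation represents $U(t)x$, for $x\in D(A^2)$, by a suitably interpreted contour integral along $\Gamma_{\mathrm{high}}$.

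The decisive step is to prove $\omega_0(U)\leq\alpha$, and this is where the Hilbert space hypothesis enters. The idea is to adapt the Gearhart--Pr\"uss argument to the high-frequency remainder $U$. Concretely, for $x\in X$ the function $t\mapsto\e^{-\alpha t}U(t)x$, extended by zero to $t<0$, has vector-valued Fourier transform equal (up to a constant) to $R(\alpha+\ii s,A)x$ on $|s|\geq\beta$ and zero on $|s|<\beta$. The Plancherel identity for $L^2(\RR;X)$---available precisely because $X$ is a Hilbert space---combined with the uniform bound $\|R(\alpha+\ii s,A)\|\leq M$ on $|s|\geq\beta$ and a standard Gearhart--Pr\"uss manipulation involving the second resolvent equation then delivers the pointwise operator-norm estimate $\|U(t)\|\leq C\e^{\alpha t}$ for $t\geq0$. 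This yields $\omega_0(T-S)=\omega_0(U)\leq\alpha$ and hence $\zeta(T)\leq\alpha$. The main obstacle in the plan is precisely this last step: in a general Banach space neither the Plancherel identity for $L^2(\RR;X)$ nor the $L^2$-to-$L^\infty$ upgrade underlying Gearhart--Pr\"uss is available, which is why equality in Theorem~\ref{NA_GP} is genuinely a Hilbert space phenomenon.
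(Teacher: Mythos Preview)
Your overall strategy---construct the analytic piece $S$ explicitly via a low-frequency contour integral and then control the remainder $U=T-S$ by a Gearhart--Pr\"uss-type argument---is viable, but it is not the route the paper indicates, and your sketch is imprecise at the decisive step.

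The paper does not give a self-contained proof; it refers to \cite[Example~3.12]{BS} and says the argument rests on Plancherel's theorem together with the equivalence (i)$\Leftrightarrow$(iii) of Theorem~\ref{zeta_FM}. Concretely: after rescaling so that $s_0^\infty(A)<0$, the uniform bound on $\|R(\ii s,A)\|$ for large $|s|$ combined with Plancherel's theorem on the Hilbert space $L^2(\RR;X)$ shows immediately that $s\mapsto\phi(s)R(\ii s,A)$ is a bounded Fourier multiplier on $L^2(\RR;X)$ for any compatible $\phi$; Theorem~\ref{zeta_FM} then yields $\zeta(T)<0$. Your approach bypasses Theorem~\ref{zeta_FM} and builds $S$ by hand, which is more concrete but duplicates work already encapsulated in that theorem.

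The gap in your sketch is the claim that $t\mapsto\e^{-\alpha t}U(t)x$, extended by zero to $t<0$, has Fourier transform equal to $\mathbf{1}_{|s|\ge\beta}R(\alpha+\ii s,A)x$. When $\alpha<\omega_0(T)$ neither $\int_0^\infty\e^{-(\alpha+\ii s)t}T(t)x\,\dd t$ nor $\int_0^\infty\e^{-(\alpha+\ii s)t}S(t)x\,\dd t$ converges, so the identity cannot be read off as written; and in any case the inverse Fourier transform of $\mathbf{1}_{|s|\ge\beta}R(\alpha+\ii s,A)x$ is \emph{not} supported in $[0,\infty)$, so the ``extend by zero'' part is wrong. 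A correct execution starts, for $x\in D(A^2)$, from the absolutely convergent representation $U(t)x=\frac{1}{2\pi}\int_{|s|\ge\beta}\e^{(\alpha+\ii s)t}R(\alpha+\ii s,A)x\,\dd s$, integrates by parts in $s$ to get $tU(t)x$ as an integral of $\e^{(\alpha+\ii s)t}R(\alpha+\ii s,A)^2x$ plus bounded boundary terms at $s=\pm\beta$, pairs with $y\in X$, and applies Cauchy--Schwarz using the bounds $\int_{|s|\ge\beta}\|R(\alpha+\ii s,A)x\|^2\,\dd s\le C\|x\|^2$ and the dual estimate for $A^*$. These $L^2$-bounds are precisely where Plancherel on the line $\R\mu=\gamma>\omega_0(T)$ and the resolvent identity enter. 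One obtains $\|U(t)\|\le C t^{-1}\e^{\alpha t}$ for $t\ge1$, whence $\omega_0(T-S)\le\alpha$. So your plan can be completed, but the Fourier-transform identity as you stated it does not hold and must be replaced by this $R(\cdot,A)^2$ argument; the paper's route via Theorem~\ref{zeta_FM} packages exactly these estimates and avoids the detour.
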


The proof of this result relies on Plancherel's theorem and the equivalence of conditions (i) and (iii) in  the following result, which will be crucial in what follows; see \cite[Theorem~3.6]{BS} for a proof. Here $\Ls(\RR;\B(X))$ denotes the space of maps $S:\RR\to\B(X)$ such that $t\mapsto S(t)x$ is Bochner measurable for all $x\in X$ and such that there exists $g\in L^1(\RR)$ with  $\|S(t)\|\leq g(t)$ for almost all $t\in \RR$. Furthermore, given a semigroup generator $A$ satisfying $s_0^\infty(A)<0$, a smooth function $\phi\in C^\infty(\RR)$ will be said to be \textsl{compatible with A} if there exists a bounded open interval $I\subset\RR$ satisfying $\{s\in \RR:\ii s\in\sigma(A)\}\subset I$ and if $\phi(s)=0$ for all $s\in I$ and $\phi(s)=1$ when $|s|$ is sufficiently large.

\begin{thm}\label{zeta_FM} Let $X$ be a complex Banach space and let $T$ be a $C_0$-semigroup on $X$ with generator $A$. Suppose that $1\leq p<\infty$. Then the following are equivalent:

\begin{enumerate}
\item[(i)] $T$ is asymptotically analytic;
\item[(ii)] $s_0^\infty(A)<0$ and, for any compatible function $\phi\in C^\infty(\RR)$, there exists a map $S\in\Ls(\RR;\B(X))$ such that $\phi(s)R(\ii s,A)=(\mathcal{F}S)(s)$ for all $s\in\RR$;
\item[(iii)] $s_0^\infty(A)<0$ and, for any compatible function $\phi\in C^\infty(\RR)$, the map $s\mapsto\phi(s)R(\ii s,A)$ is a Fourier multiplier on $L^p(\RR;X)$.
\end{enumerate}
\end{thm}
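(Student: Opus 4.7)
The plan is to prove the circle $(i) \Rightarrow (ii) \Rightarrow (iii) \Rightarrow (i)$. Fix throughout a compatible $\phi \in C^\infty(\RR)$ and $p \in [1, \infty)$.

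For $(i) \Rightarrow (ii)$, I would exploit the analytic approximant directly. Since $\zeta(T) < 0$ one has $s_0^\infty(A) \leq \zeta(T) < 0$, and there exist $\theta > 0$, $\omega \in (\zeta(T), 0)$, and $F \in H(\Sigma_\theta; \B(X))$ with $\|T(t) - F(t)\| = O(\e^{\omega t})$ as $t \to \infty$. I would decompose $T(t) = U(t) + V(t)$ on $(0, \infty)$, where $V(t) = \chi(t) F(t)$ for a smooth cutoff $\chi$ vanishing near $0$ and equal to $1$ on $[1, \infty)$, and $U = T - V$. Then $U$ decays exponentially, and a standard convolution argument (using that $1 - \phi$ has compact support, so its inverse Fourier transform is Schwartz) shows that $\phi(s)(\F U)(s)$ is the Fourier transform of an element of $\Ls(\RR; \B(X))$. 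For $V$, the analyticity of $F$ on $\Sigma_\theta$ combined with the exponential bound justifies rotating the path of integration from $(0, \infty)$ into a ray in $\Sigma_\theta$, thereby extending the Laplace transform of $V$ analytically across the imaginary axis to a strip $\{\R\lambda > \omega'\}$ for some $\omega' < 0$. Shifting the Fourier-inversion contour into this strip produces an inverse Fourier transform with exponential decay, yielding the desired $S \in \Ls(\RR; \B(X))$ with $\phi(s) R(\ii s, A) = (\F S)(s)$.

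The implication $(ii) \Rightarrow (iii)$ is routine: if $\phi(\cdot) R(\ii \cdot, A) = \F S$ with $S$ dominated by some $g \in L^1(\RR)$, then the corresponding multiplier operator acts by convolution with $S$, and Young's inequality gives $L^p(\RR; X)$-boundedness with norm at most $\|g\|_{L^1(\RR)}$.

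The main obstacle is the converse $(iii) \Rightarrow (i)$, where the analytic approximant must be reconstructed from a Fourier multiplier assumption without any a priori holomorphy. My plan here would be to define formally
$$V(t) = \frac{1}{2\pi} \int_\RR \e^{\ii s t} \phi(s) R(\ii s, A) \, \dd s$$
as a principal-value Fourier integral interpreted via the multiplier action on suitable test elements of $L^p(\RR; X)$, and then combine the $L^p$ multiplier bound with the resolvent identity and the uniform resolvent bound on vertical lines (available because $s_0^\infty(A) < 0$) to deform the integration contour into the lower half-plane. This should produce an analytic extension of $V$ to a sector $\Sigma_\theta$ together with an exponential bound. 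One then verifies that $T - V$, after correcting for the spectral region inside the support of $1 - \phi$ via a bounded functional-calculus adjustment, has negative growth bound, which gives $\zeta(T) < 0$.
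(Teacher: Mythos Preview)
The paper does not supply its own proof of this theorem: it is quoted verbatim with the attribution ``see \cite[Theorem~3.6]{BS} for a proof.'' There is therefore no in-paper argument to compare your proposal against.

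That said, a few comments on your sketch. The directions $(i)\Rightarrow(ii)$ and $(ii)\Rightarrow(iii)$ are along the right lines; indeed the explicit formula for $S$ recorded in the Remark following Theorem~\ref{KT_zeta} (quoting \cite[Remark~3.8]{BS}) confirms that the decomposition $T=U+V$ with $V$ built from the analytic approximant and $U$ exponentially small is exactly how the map $S$ arises, and Young's inequality is the obvious bridge to the multiplier statement.

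Your plan for $(iii)\Rightarrow(i)$ has a genuine gap. The hypothesis $s_0^\infty(A)<0$ gives uniform resolvent bounds only on a region $Q_{\alpha,\beta}$ with $\beta\geq0$, i.e.\ away from a horizontal strip around the real axis; you cannot simply ``deform the integration contour into the lower half-plane'' across that strip, because the resolvent need not extend there. Moreover, the $L^p$-multiplier hypothesis gives you a bounded convolution operator on $L^p(\RR;X)$, not a pointwise integral representation of $V(t)$ amenable to contour shifting. The argument in \cite{BS} is more delicate: one constructs the candidate analytic function via an inverse Laplace transform of the resolvent along a contour that stays in the region where the resolvent is controlled (using the compatibility of $\phi$ to avoid the boundary spectrum), and then uses the multiplier bound to control the difference $T-V$ in growth. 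Your final ``bounded functional-calculus adjustment'' is also not available in this generality, since no $H^\infty$-calculus is assumed. If you want to complete this direction you should consult \cite{BS} directly rather than attempt to reinvent it.
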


 Here  $\phi(s)R(\ii s,A)$ is taken to be zero whenever $\phi(s)=0$ and the Fourier transform $\mathcal{F}$ is taken in the strong sense, so that, given $S\in\Ls(\RR;\B(X))$ and $s\in \RR$, $$(\mathcal{F}S)(s)x=\int_\RR\e^{-\ii st} S(t)x\,\dd t$$ for all $x\in X$. Note also that, even though the maps $s\mapsto\phi(s)R(\ii s,A)$ depend on the choice of $\phi$, the property of such a map being a Fourier multiplier on $L^p(\RR;X)$ is independent of this choice; see \cite[Remark~2.2]{BS}.

It is now possible to extend Theorem~\ref{BCT} to the Banach space setting, as is done in Theorem~\ref{KT_zeta} below. The proof of this result combines the method used in \cite{BCT} with Theorem~\ref{zeta_FM} and the following simple lemma, which is probably well known. Here $\mathcal{S}(\RR)$ denotes the  space of Schwartz functions on $\RR$.

\begin{lem}\label{distr_lem}
Let $X$ be a complex Banach space and let $S\in L^1(\RR;X)$.
\begin{enumerate}
\item[(a)] If $\mu$ is a bounded Borel measure on $\RR$, then  $\mu*S\in L^1(\RR;X)$. 
\item[(b)] If \begin{equation}\label{distr_eq}\int_\RR\rho(t)S(t)\,\dd t=0\end{equation} for all $\rho\in\mathcal{S}(\RR)$, then $S(t)=0$ for almost all $t\in\RR$.
\end{enumerate}
\end{lem}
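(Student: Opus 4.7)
\emph{Proof plan.} Part (a) should be a direct Fubini computation. Define $\mu*S$ pointwise almost everywhere by $(\mu*S)(t) = \int_\RR S(t-s)\,\dd\mu(s)$; Bochner measurability follows because $\mu$ can be approximated in total variation norm by finitely supported measures, for which $\mu*S$ is a finite linear combination of translates of $S$ and hence Bochner measurable. Tonelli's theorem applied to $(t,s)\mapsto\|S(t-s)\|$ then yields
\[
\int_\RR\|(\mu*S)(t)\|\,\dd t \;\leq\; \int_\RR\!\int_\RR \|S(t-s)\|\,\dd t\,\dd|\mu|(s) \;=\; |\mu|(\RR)\,\|S\|_{L^1(\RR;X)},
\]
so $\mu*S\in L^1(\RR;X)$.

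For part (b), the main idea is to exploit the translation invariance of the class $\mathcal{S}(\RR)$. Given any fixed $\phi\in\mathcal{S}(\RR)$ and any $s\in\RR$, the function $t\mapsto\phi(s-t)$ again belongs to $\mathcal{S}(\RR)$, so applying the hypothesis \eqref{distr_eq} to it gives
\[
(\phi*S)(s)\;=\;\int_\RR \phi(s-t)S(t)\,\dd t \;=\; 0 \qquad\text{for every }s\in\RR.
\]
Hence $\phi*S\equiv 0$ as an element of $L^1(\RR;X)$, for every Schwartz $\phi$.

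Now specialise to a mollifier family $\phi_\varepsilon(t)=\varepsilon^{-1}\phi_1(t/\varepsilon)$ built from a nonnegative $\phi_1\in\mathcal{S}(\RR)$ of integral one (e.g.\ a normalised Gaussian). The standard scalar result that $\phi_\varepsilon*f\to f$ in $L^1$ as $\varepsilon\to 0^+$ extends to Bochner-$L^1$ functions with values in $X$ by the usual density argument combined with continuity of translation in $L^1(\RR;X)$. Since $\phi_\varepsilon*S=0$ for every $\varepsilon>0$, passing to the limit forces $S=0$ in $L^1(\RR;X)$, which is the desired conclusion.

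The plan above is entirely routine: part (a) is a norm estimate via Fubini, and the only mildly technical point in part (b) is the vector-valued version of the approximation-of-the-identity statement, which I expect to be the main (but still minor) obstacle and which one handles by first verifying translation continuity of $t\mapsto S(\cdot+t)$ in the $L^1(\RR;X)$ norm. Alternative routes are available should this cause trouble, for example reducing to the scalar case by pairing with a countable norming subset of $X^*$ (which exists by the separability of the essential range of the Bochner-measurable function $S$) and then invoking the scalar fact that Schwartz functions separate points of $L^1(\RR)$.
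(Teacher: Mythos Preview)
Your argument for part (a) has one incorrect step: bounded Borel measures on $\RR$ are \emph{not} in general approximable in total variation norm by finitely supported measures. If $\mu$ has a non-atomic part $\mu_c$, then $\|\mu-\nu\|_{\mathrm{TV}}\geq |\mu_c|(\RR)>0$ for every finitely supported $\nu$, since $\mu_c$ and $\nu$ are mutually singular. Your Tonelli estimate is correct; only the Bochner-measurability step needs repair. The paper establishes measurability via Pettis' theorem: one may assume $X$ separable (since $S$ is essentially separably valued), verify that $(s,t)\mapsto S(t-s)$ is measurable with respect to the product of $|\mu|$ and Lebesgue measure, and then check weak measurability of $t\mapsto(\mu*S)(t)$ by pairing with $\phi\in X^*$ and invoking scalar Fubini. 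This is a minor technical fix, not a structural problem with your plan.

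For part (b) your main route --- observing that $\phi*S\equiv 0$ for every $\phi\in\mathcal{S}(\RR)$ and then letting $\phi=\phi_\varepsilon$ run through an approximate identity to conclude $S=0$ in $L^1(\RR;X)$ --- is correct and is genuinely different from the paper's argument. The paper proceeds exactly along the lines of your stated \emph{alternative}: assuming $X$ separable, it picks a countable norming sequence $(\phi_n)\subset X^*$ via Hahn--Banach applied to a dense sequence in $X$, applies the scalar fact to each $\phi_n\circ S$ to obtain null sets $E_n$, and shows $S(t)=0$ off $\bigcup_n E_n$. Your mollifier approach has the advantage of avoiding any separability reduction or choice of functionals, at the cost of needing the vector-valued approximate-identity statement (which, as you note, reduces to continuity of translation in $L^1(\RR;X)$); the paper's approach trades that analytic ingredient for a more structural Hahn--Banach argument. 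Both are standard and short.
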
 

\begin{proof}[\textsc{Proof}] 
Part (a) is a special case of  the vector-valued version of Fubini's theorem; see  for instance \cite[Theorem~1.1.9]{Green_book} and \cite[Chapter~III, Section~11, Theorem~9]{DS58}. For a direct proof, let the map $F:\RR^2\to X$ be given by $F(s,t)=S(t-s)$ and note that, by Pettis' theorem (see \cite[Theorem~1.1.1]{Green_book}), there is no loss of generality in assuming that $X$ is separable. Moreover, the map $F$ is measurable with respect to the product measure of $\mu$ and the Lebesgue measure on $\RR$, as can be seen for instance by another application of Pettis' theorem and a simple approximation argument based on the fact that, given any Lebesgue measurable subset $E$ of $\RR$, there exists a Borel measurable set $E'\subset\RR$ such that the symmetric difference  $E\triangle E'$ is null. Since \begin{equation}\label{Fub_bd}\int_\RR\int_\RR \|F(s,t)\|\,\dd t\,\dd|\mu|(s)<\infty,\end{equation}  it follows form the scalar-valued versions of Tonelli's and Fubini's theorems that $(s,t)\mapsto\|F(s,t)\|$ is integrable over $\RR^2$ with respect to the product measure and that $\int_\RR\|F(s,t)\|\,\dd\mu(s)$ exists for almost all $t\in \RR$. Hence so does $\int_\RR F(s,t)\,\dd\mu(s)$. Furthermore,  for each $\phi\in X^*$, the map $$t\mapsto\int_\RR \phi(F(s,t))\,\dd\mu(s)=\phi\left(\int_\RR F(s,t)\,\dd\mu(s)\right)$$ is measurable on $\RR$.  By  Pettis' theorem, the map  $\mu*S:t\mapsto\int_\RR F(s,t)\,\dd\mu(s)$ is Bochner measurable on $\RR$, and the result now follows, since $$\int_\RR\|(\mu*S)(t)\|\,\dd t\leq\int_\RR\int_\RR \|F(s,t)\|\,\dd|\mu|(s)\,\dd t<\infty$$ by the scalar-valued version of Fubini's theorem and \eqref{Fub_bd}.

 The proof of (b) runs along similar lines. Once again, by Pettis' theorem, it is possible to assume that $X$ is separable. Let $\{x_n:n\geq1\}$ be a dense subset of $X$ and, for each $n\geq1$, let $\phi_n\in X^*$ be such that $\|\phi_n\|=1$ and $\phi_n(x_n)=\|x_n\|$. The existence of such functionals is a consequence of the Hahn-Banach theorem. Applying $\phi_n$ to both sides of \eqref{distr_eq} shows that $$\int_\RR \rho(t)\phi_n(S(t))\,\dd t=0$$ for all $\rho\in\mathcal{S}(\RR)$, and hence, for each $n\geq1$, there exists a null subset $E_n$ of $\RR$ such that $\phi_n(S(t))=0$ for all $t\in\RR\backslash E_n$. Let $E=\bigcup_{n\geq1} E_n$, noting that $E$ itself is null, and suppose that $t\in\RR\backslash E$. Given $\varepsilon>0$,  there exists $n\geq1$ such that  $\|x_n-S(t)\|<\varepsilon/2$. Thus $$\|S(t)\|\leq \|x_n\|+\|x_n-S(t)\|<\phi_n(x_n-S(t))+\frac{\varepsilon}{2}<\varepsilon.$$ Since $\varepsilon>0$ was arbitrary, it follows that $S(t)=0$ for all $t\in\RR\backslash E$.
\end{proof}

The next result, then, is an unquantified version of the Katznelson-Tzafriri theorem for measures which generalises Theorem~\ref{BCT} to the Banach space setting.  

\begin{thm}\label{KT_zeta}
Let $X$ be a complex Banach space and let $T$ be a bounded asymptotically analytic $C_0$-semigroup $X$ with generator $A$. Suppose  that $\ii\sigma(A)\cap\mathbb{R}$ is of spectral synthesis and let $\mu$ be any bounded Borel measure on $\RR_+$ such that $(\F\mu)(s)=0$ for all $s\in\ii\sigma(A)\cap \RR$. Then $\|T(t)\widehat{\mu}(T)\|\to0$ as $t\to\infty$.
\end{thm}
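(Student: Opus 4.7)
The plan is to reduce the statement to the classical $L^1(\RR_+)$ Katznelson-Tzafriri theorem (Theorem \ref{KT_cont}) via a convolution argument based on Lemma \ref{distr_lem}, and to use the Fourier-multiplier characterisation of asymptotic analyticity (Theorem \ref{zeta_FM}) to close the gap between $L^1$-functions and finite measures. The starting point is the elementary observation that for any $g\in L^1(\RR_+)$, Lemma \ref{distr_lem}(a) yields $g*\mu\in L^1(\RR_+)$, and the convolution identity $\F(g*\mu)=\F g\cdot\F\mu$ shows that $\F(g*\mu)$ vanishes on $\ii\sigma(A)\cap\RR$ whenever $\F\mu$ does. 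A direct Fubini computation also gives $\widehat{g*\mu}(T)=\widehat{g}(T)\widehat{\mu}(T)$, so combining Theorem \ref{KT_cont} with the spectral synthesis hypothesis yields
\[
\bigl\|T(t)\widehat{g}(T)\widehat{\mu}(T)\bigr\|\to 0\quad\text{as }t\to\infty
\]
for every $g\in L^1(\RR_+)$. The main task is then to dispose of the extra factor $\widehat{g}(T)$.

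This is where asymptotic analyticity enters via Theorem \ref{zeta_FM}: for any compatible $\phi\in C^\infty(\RR)$ there exists $S\in\Ls(\RR;\B(X))$ with $(\F S)(s)=\phi(s)R(\ii s,A)$ for all $s\in\RR$. I would use the norm-integrability of $S$, together with the compatibility of $\phi$ (which vanishes in an open neighbourhood of $\Lambda=\ii\sigma(A)\cap\RR$ and equals $1$ for $|s|$ large), to build an approximating family $\{g_n\}\subset L^1(\RR_+)$ for which the decomposition
\[
\widehat{\mu}(T)=\widehat{g_n}(T)\widehat{\mu}(T)+\bigl(\widehat{\mu}(T)-\widehat{g_n}(T)\widehat{\mu}(T)\bigr)
\]
has the property that the remainder, when multiplied by $T(t)$ on the left, tends to zero in operator norm in an appropriate combined $t,n\to\infty$ limit. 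The singularities of $R(\ii s,A)$ at $\Lambda$ are avoided by the cutoff $\phi$, while $S$ furnishes an operator-norm handle on the complementary frequency range. Lemma \ref{distr_lem}(b) is used to identify the resulting operator-valued $L^1$-integrals unambiguously, by testing against Schwartz functions.

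The main obstacle will be carrying out this procedure in the operator norm rather than in the strong operator topology, since a generic approximate-identity family $\{g_n\}\subset L^1(\RR_+)$ only satisfies $\widehat{g_n}(T)\to I$ strongly. The critical input is precisely the Fourier-multiplier formulation in Theorem \ref{zeta_FM}, as opposed to the mere resolvent boundedness used in Theorem \ref{BCT}: it supplies the operator-valued, $L^1$-integrable function $S$, which provides the uniform control needed to transfer the norm decay from $\widehat{g}(T)\widehat{\mu}(T)$ to $\widehat{\mu}(T)$. In effect, this replacement of Plancherel's theorem (available only in Hilbert spaces) by an $L^1$-integrable Fourier-multiplier representation is the key to extending Theorem \ref{BCT} to general Banach spaces.
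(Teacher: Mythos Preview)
Your broad strategy aligns with the paper's: split $\widehat{\mu}(T)$ into a piece that reduces to the $L^1$ Katznelson--Tzafriri theorem and a high-frequency remainder controlled via the $\Ls$-representation $S$ from Theorem~\ref{zeta_FM}. However, the paper does not use an approximate-identity family $\{g_n\}\subset L^1(\RR_+)$. It fixes a \emph{single} $\psi=\F^{-1}\varphi\in\mathcal S(\RR)$ with $\varphi\in C_c^\infty(\RR)$ equal to $1$ near $\ii\sigma(A)\cap\RR$, and splits $\mu=\nu+\xi$ with $\nu=\mu*\psi$ (absolutely continuous, handled via Parseval and Riemann--Lebesgue much as in Theorem~\ref{KT_cont}) and $\xi=\mu*(\delta_0-\psi)$. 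Note that $\psi\notin L^1(\RR_+)$, so your factorisation $\widehat{g*\mu}(T)=\widehat g(T)\widehat\mu(T)$ is not what drives the argument; instead the semigroup is extended by zero on $(-\infty,0)$ and one works directly with $F(t)=\int_\RR T(s+t)\,\dd\nu(s)$ and $G(t)=\int_\RR T(s+t)\,\dd\xi(s)$.

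The genuine gap is the remainder term. You assert that $S$ ``provides the uniform control needed'' but give no mechanism, and an approximate-identity scheme runs straight into the strong-versus-norm obstruction you yourself flag: $(I-\widehat{g_n}(T))\widehat{\mu}(T)=\widehat{(\delta_0-g_n)*\mu}(T)$ is again of measure type, so you are back where you started. The paper's argument is not an approximation at all. First, Parseval-type manipulations together with Lemma~\ref{distr_lem}(b) identify $G(\cdot)x$ almost everywhere with $(\mu'*S)(\cdot)x$, whence $G(\cdot)x\in L^1(\RR;X)$ for each $x\in X$; the closed graph theorem then yields $\int_\RR\|G(t)x\|\,\dd t\leq C\|x\|$. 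The decisive step, absent from your plan, is the elementary identity
\[
G(t)x=\frac{1}{t}\left(\int_0^t T(t-s)G(s)x\,\dd s-\int_{[-t,0)} rT(t+r)x\,\dd\xi(r)\right),
\]
which converts this uniform $L^1$-bound into $\|G(t)\|=O(t^{-1})$. Without this identity (or an equivalent device) there is no passage from ``$G(\cdot)x$ is integrable for every $x$'' to ``$\|G(t)\|\to 0$'', and that is precisely the missing idea in your proposal.
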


\begin{proof}
Since $s_0^\infty(A)\leq\zeta(T)$, the assumption of asymptotic analyticity implies that $\{\ii s:|s|\geq R\}\subset\rho(A)$ and that  $\sup\{\|R(\ii s,A)\|:|s|\geq R\}<\infty$  for some sufficiently large $R>0$. Fix a function $\varphi\in C_\mathrm{c}^\infty(\mathbb{R})$ such that $\varphi(s)=1$ for $|s|\leq R$ and let  $\psi=\F^{-1}\varphi$. Then $\psi\in \mathcal{S}(\RR)$, since $C_\mathrm{c}^\infty(\mathbb{R})\subset\mathcal{S}(\RR)$ and $\mathcal{F}$ maps $\mathcal{S}(\RR)$ bijectively onto itself. Furthermore, let the bounded Borel measures $\nu$ and $\xi$ on $\mathbb{R}$ be defined by $\nu=\mu*\psi$ and $\xi=\mu*(\delta_0-\psi)$, so that $\mu=\nu+\xi$, and let the functions $F,G:\mathbb{R}\rightarrow \B(X)$ be given by 
\begin{equation}\label{F,G}
F(t)=\int_\mathbb{R} T(s+t)\,\dd\nu(s)\qquad\mbox{and}\qquad G(t)=\int_\mathbb{R} T(s+t)\,\dd\xi(s).
\end{equation}
 Here the semigroup has been extended by zero on $(-\infty,0)$ and the integrals are to be understood in the strong sense. It is clear that $T(t)\widehat{\mu}(T)=F(t)+G(t)$ for all $t\geq0$, and hence the result will follow once it has been established that both $\|F(t)\|\to0$ and $\|G(t)\|\to0$ as $t\to\infty$.

Consider first the function $F$. Since the measure  $\nu$ is absolutely continuous with respect to Lebesgue measure on $\RR$, there exists a function $a\in L^1(\RR)$ such that $\dd\nu(t)=a(t)\dd t$, and hence $(\F a)( s)=(\F\mu)( s)\varphi(s)$ for all $s\in \RR$.  In particular, $\F a$ vanishes on $\ii\sigma(A)\cap \mathbb{R}$. By assumption this set is of spectral synthesis, and hence there exist functions $a_n\in L^1(\RR)$, $n\geq1$, such that  $\F a_n$ vanishes in a neighbourhood of  $\ii\sigma(A)\cap \mathbb{R}$ for each $n\geq1$ and $\|a_n-a\|_1\rightarrow0$ as $n\rightarrow\infty$.  Since functions with compactly supported Fourier transform are dense in $L^1(\RR)$, there is no loss of generality in assuming that each $\F a_n$ has compact support. By the dominated convergence theorem and Parseval's identity, \begin{equation*}\label{Parseval}\begin{aligned}\int_\RR a_n(s)T(t+s)\,\dd s &=\lim_{\alpha\rightarrow0+} \int_\RR a_n(s-t)\e^{-\alpha s}T(s)\,\dd s\\&=\lim_{\alpha\rightarrow0+}\frac{1}{2\pi} \int_\RR \e^{\ii st}(\F a_n)(-s)R(\alpha+\ii s,A)\,\dd s\\&=\frac{1}{2\pi} \int_\RR \e^{\ii st}(\F a_n)(-s)R(\ii s,A)\,\dd s
\end{aligned}\end{equation*}  for all $t\in\RR$ and $n\geq1.$ Since the last integral exists as a Bochner integral in $\B(X)$,  $$\left\|\int_\RR a_n(s)T(t+s)\,\dd s\right\|\rightarrow0$$ as $t\rightarrow\infty$ by the Riemann-Lebesgue Lemma. Now $$\left\|F(t)-\int_\RR a_n(s)T(t+s)\,\dd s\right\|\leq M\|a-a_n\|_1$$ for all $t\in \RR$, and hence 
\begin{equation*}\label{F_conv}
\left\|F(t)-\int_\RR a_n(s)T(t+s)\,\dd s\right\|\to0
\end{equation*}
as $n\rightarrow\infty$, uniformly for $t\in \RR$. It follows that $\|F(t)\|\rightarrow0$ as $t\rightarrow\infty$.

Now consider the function $G$. Since the map $\phi\in C^\infty(\RR)$ given by $\phi(s)=1-\varphi(-s)$ is compatible with  $A$, it follows from Theorem~\ref{zeta_FM} that there exists a map $S\in \Ls(\RR;\B(X))$ such that $\phi(s)R(\ii s,A)=(\F S)(s)$ for all $s\in \RR$. Writing $\mu'$ for the bounded Borel measure on $(-\infty,0]$ satisfying $\mu'(E)=\mu(-E)$ for any Borel subset $E$ of $(-\infty,0]$,  part (a) of Lemma~\ref{distr_lem} shows that $\mu'*S\in \Ls(\RR;\B(X)).$ For $\rho\in\mathcal{S}(\RR)$ and $x\in X$, it follows by Fubini's theorem, the dominated convergence theorem and Parseval's identity that $$\begin{aligned}\int_\RR \rho(t)G(t)x\,\dd t &=\int_\RR \rho(t)\int_\RR T(s+t)x\,\dd\xi(s)\,\dd t \\&=
\lim_{\alpha\rightarrow0+} \int_\RR\int_\RR \rho(t-s)\e^{-\alpha t}T(t)x\,\dd t\,\dd\xi(s)\\&=
\lim_{\alpha\rightarrow0+} \int_\RR\int_\RR\e^{\ii st} (\F^{-1}\rho)(t)R(\alpha+\ii t,A)x\,\dd t\,\dd\xi(s)\
\\&=\lim_{\alpha\rightarrow0+} \int_\RR(\F^{-1}\rho)(t)(\F\mu)(-t)(1-\varphi(-t)) R(\alpha+\ii t,A)x\,\dd t
\\&= \int_\RR (\F^{-1}\rho)(t)(\F\mu)(-t)(\F S)(t)x\,\dd t
\\&= \int_\RR \rho(t)S_\mu(t)x\,\dd t,
\end{aligned}$$
where  $S_\mu=\mu'*S$.   Fix $x\in X$ and, for $t\in\RR$, let $S_x\in L^1(\RR;X)$ be given by $S_x(t)=G(t)x-S_\mu(t)x$. Then  $S_x(t)=0$ for almost all $t\in\RR$ by part (b) of Lemma~\ref{distr_lem}, and hence $G(\cdot)x\in L^1(\RR;X)$.  Since the map $\Phi:X\rightarrow L^1(\RR;X)$ given by $\Phi(x)=G(\cdot)x$ has closed graph, it follows from  the closed graph theorem that, for some constant $C>0$, 
\begin{equation}\label{G_bound}
\int_\RR \left\|G(t)x\right\|\dd t\leq C\|x\|
\end{equation}
 for all $x\in X$. Now, given any $x\in X$ and $t\geq0$, $$\begin{aligned}\int_0^t T(t-s)G(s)x\,\dd s&=\int_0^t\left(\int_\mathbb{R}-\int_{[-t,-s)}\right)T(t+r)x\,\dd\xi(r)\dd s\\&=tG(t)x+\int_{[-t,0)}rT(t+r)x\,\dd\xi(r)\end{aligned}$$ and hence, for $t>0$, 
 \begin{equation}\label{G_formula}
 G(t)x=\frac{1}{t}\left(\int_0^t T(t-s)G(s)x\,\dd s-\int_{[-t,0)}rT(t+r)x\,\dd\xi(r)\right).
 \end{equation}
   By \eqref{G_bound}, 
   \begin{equation}\label{G_bd}\begin{aligned}\left\|\int_0^t T(t-s)G(s)\,\dd s\right\|&\leq M C,\end{aligned}
   \end{equation}
    where $M=\sup\{\|T(t)\|:t\geq0\}$, and   the dominated convergence theorem gives $$\left\|\int_{[-t,0)}\frac{r}{t}T(t+r)\,\dd\xi(r)\right\|\rightarrow0$$ as $t\rightarrow\infty$.  Thus $\|G(t)\|\rightarrow0$ as $t\rightarrow\infty$ and the result follows.
\end{proof}

\begin{rem}
As observed in \cite[Remark~3.8]{BS}, the map $S\in \Ls(\RR;\B(X))$ in the above proof can be written down explicitly. Indeed, given $x\in X$,   $$S(t)x=T(t)x-T_R(t)x-\frac{1}{2\pi\ii}\int_{|s|\geq R}\e^{\ii st}\varphi(-s)R(\ii s,A)x\,\dd s$$ for almost all $t\in \RR$. Here $\varphi$ and $R$ are as above, the semigroup $T$ has again been extended by zero on $(-\infty,0)$ and  $$T_R(t)=\frac{1}{2\pi\ii}\int_{\Gamma_R}\e^{\lambda t} R(\lambda, A)\,\dd\lambda$$ for any path $\Gamma_R$ that connects the points $\pm\ii R$ and otherwise lies in $\{\lambda\in\CC:\R\lambda>0\}.$ By Cauchy's theorem, the definition of $T_R$ is independent of the  choice of the contour $\Gamma_R$.  
\end{rem}

As stated near the beginning of this section, there are several important cases in which a semigroup is known to be asymptotically analytic. The first part of the following corollary corresponds to \cite[Theorem~6.14]{BCT}. Note that a $C_0$-semigroup $T$ is said to be \textsl{uniformly exponentially balancing} if there exist $\omega\in\RR$ and a non-zero $P\in\B(X)$ such that $\|\e^{-\omega t}T(t)-P\|\to0$ as $t\to\infty$, and recall that  $T$ is said to be \textsl{quasi-compact} if  $\omega_{\mathrm{ess}}(T)<\omega_0(T)$.

\begin{cor}\label{KT_zeta_cor}
Let $X$ be a complex Banach space and let $T$ be a bounded $C_0$-semigroup $X$ with generator $A$. Suppose that $\ii\sigma(A)\cap\mathbb{R}$ is of spectral synthesis, and let $\mu$ be any bounded Borel measure on $\RR_+$ such that $(\F\mu)(s)=0$ for all $s\in\ii\sigma(A)\cap \RR$. Then $$\lim_{t\to\infty}\|T(t)\widehat{\mu}(T)\|=0$$ provided one of the following conditions is satisfied:
\begin{enumerate}
\item[(i)] $X$ is a Hilbert space and there exists $R>0$ such that $ \{\ii s:|s|\geq R\}\subset\rho(A)$ and $\sup\{\|R(\ii s,A)\|:|s|\geq R\}<\infty$;
\item[(ii)] $T$ is uniformly exponentially balancing;   
\item[(iii)] $T$ is quasi-compact;
\item[(iv)] $T$ has $L^p$-resolvent for some $p\in (1,\infty)$;
\item[(v)] $T$ is eventually differentiable.
\end{enumerate}
\end{cor}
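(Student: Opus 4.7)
The plan is to reduce every case to Theorem~\ref{KT_zeta} by checking in each instance that $T$ is asymptotically analytic, i.e.\ that $\zeta(T)<0$. The spectral synthesis assumption and the vanishing condition on $\mu$ are already exactly those required by that theorem, so once asymptotic analyticity has been verified, no further work is needed.

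Cases (iii)--(v) can essentially be read off from the properties of $\zeta$ recalled earlier in this section. In (iii), the hypothesis $\omega_\mathrm{ess}(T)<\omega_0(T)$ together with boundedness of $T$ (which forces $\omega_0(T)\leq 0$) and the inequality $\zeta(T)\leq\omega_\mathrm{ess}(T)$ of \cite[Proposition~5.3]{BBS} gives $\zeta(T)<0$. In (iv) and (v), \cite[Theorem~5.7]{BBS} yields $\zeta(T)=-\infty$ outright. For case (i), I would invoke Theorem~\ref{NA_GP}, which in the Hilbert space setting identifies $\zeta(T)$ with $s_0^\infty(A)$. Setting $M_0:=\sup\{\|R(\ii s,A)\|:|s|\geq R\}<\infty$, the Neumann expansion $R(\mu,A)=\sum_{n\geq 0}R(\ii s,A)^{n+1}(\ii s-\mu)^n$ converges for $|\mu-\ii s|<1/M_0$ and yields a uniform bound on $\{\mu:|\R\mu|\leq 1/(2M_0),\,|\I\mu|\geq R\}$; combined with the Hille--Yosida bound $\|R(\lambda,A)\|\leq M/\R\lambda$ on the right half-plane, this forces $s_0^\infty(A)\leq -1/(2M_0)<0$, and hence $\zeta(T)<0$.

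The case I expect to be the main obstacle is (ii). Here $\|\e^{-\omega t}T(t)-P\|\to 0$ with $P\neq 0$, and boundedness of $T$ immediately rules out $\omega>0$ (else $\e^{-\omega t}T(t)\to 0$ in norm, forcing $P=0$). For $\omega<0$ the triangle inequality gives $\|T(t)\|\leq C\e^{\omega t}$ for large $t$, so $\omega_0(T)\leq\omega<0$ and simply taking $S\equiv 0$ in the definition \eqref{zeta_def} of $\zeta(T)$ does the job. The delicate case is $\omega=0$: then $T(t)\to P$ in the operator norm, and passing to the limit in the relation $T(t)T(s)=T(t+s)$ shows that $T(t)P=PT(t)=P$ and $P^2=P$, so that $P$ is a projection commuting with the entire semigroup, with $T$ acting as the identity on $\Ran P$ and leaving $\Ker P$ invariant. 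Since $T(t)-P=T(t)(I-P)$, the restriction of $T$ to $\Ker P$ is a bounded $C_0$-semigroup whose norm tends to zero, and the classical observation that uniform stability upgrades to exponential stability (pick $t_0$ with $\|T(t_0)|_{\Ker P}\|<1$ and iterate via the semigroup property) yields $\|T(t)-P\|\leq C\e^{-\varepsilon t}$ for some $\varepsilon>0$. Choosing $S(z)\equiv P$, which is trivially analytic and exponentially bounded, gives $\omega_0(T-S)\leq -\varepsilon$ and hence $\zeta(T)<0$ once more.

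With asymptotic analyticity of $T$ established in each of (i)--(v), Theorem~\ref{KT_zeta} delivers the conclusion.
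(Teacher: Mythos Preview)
Your proposal is correct and follows the same overall strategy as the paper: in each case one verifies that $T$ is asymptotically analytic and then appeals to Theorem~\ref{KT_zeta}. Cases (i), (iii), (iv) and (v) are handled identically to the paper's proof (Theorem~\ref{NA_GP} plus a Neumann series for (i); the bound $\zeta(T)\leq\omega_{\mathrm{ess}}(T)$ for (iii); and $\zeta(T)=-\infty$ via \cite[Theorem~5.7]{BBS} for (iv) and (v)).

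The only noteworthy difference is in case (ii). The paper simply cites the known fact that uniformly exponentially balancing semigroups satisfy $\zeta(T)<\omega_0(T)$ (\cite[Proposition~4.5.9]{DrBlake}, \cite{Thi98}), whereas you give a self-contained argument: ruling out $\omega>0$ by boundedness, dispatching $\omega<0$ trivially, and in the remaining case $\omega=0$ showing directly that $P$ is a projection commuting with $T$, that the restriction of $T$ to $\Ker P$ is uniformly (hence exponentially) stable, and then taking the constant function $S\equiv P$ in the definition of $\zeta(T)$. This is a more elementary route that avoids the external reference, at the cost of a few extra lines; it effectively reproves the relevant special case of the cited result.
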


\begin{proof}
It suffices to show that in each of the cases $T$ is asymptotically analytic, since the result will then follow from Theorem~\ref{KT_zeta}. In the first case this is a consequence of Theorem~\ref{NA_GP}, since the assumption on the resolvent implies, by a simple Neumann series argument,  that $s_0^\infty(A)<0$. In the second case the claim follows from the fact that for uniformly exponentially balancing semigroups $\zeta(T)<\omega_0(T)$ (see \cite[Proposition~4.5.9]{DrBlake} and also \cite{Thi98}), while in case (iii) it is a consequence of the estimate $\zeta(T)\leq\omega_{\mathrm{ess}}(T)$. If either (iv) or (v) holds, then $\zeta(T)=-\infty$, as mentioned above.
\end{proof}

The final result of this section is a quantified version of Theorem~\ref{KT_zeta} in the special case where, in the notation of Section~\ref{sec:intro}, $\mu=e-\delta_0$ so that $\widehat{\mu}(T)=AR(1,A)$. This result is a generalisation to the Banach space setting of \cite[Theorem~6.15]{BCT} and provides a counterpart to Corollary~\ref{cor:lb_cont}. It is possible to  formulate versions of the result in which the assumption of asymptotic analyticity is replaced by one of the conditions (i)--(v) of Corollary~\ref{KT_zeta_cor}. 

\begin{thm}\label{KT_zeta_rates}
Let $X$ be a complex Banach space and let $T$ be a bounded asymptotically analytic $C_0$-semigroup $X$ with generator $A$. Suppose that $\ii\sigma(A)\cap\mathbb{R}=\{0\}$. Then $\|T(t)AR(1,A)\|\to0$ as $t\to\infty$.  In fact, given any dominating function $m$ for the resolvent of $A$,  any right-inverse $m^{-1}$ of $m$ and any $\varepsilon\in(0,1)$,  
\begin{equation}\label{rate}
\|T(t)AR(1,A)\|=O\big(m^{-1}(t^{1-\varepsilon})\big)
\end{equation}
 as $t\to\infty$.
\end{thm}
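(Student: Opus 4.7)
The qualitative claim $\|T(t)AR(1,A)\|\to 0$ is immediate from Theorem~\ref{KT_zeta}: take $\mu = e - \delta_0$, for which $\widehat{\mu}(T) = R(1,A) - I = AR(1,A)$ and $(\F\mu)(s) = -\ii s/(1+\ii s)$ vanishes on $\ii\sigma(A)\cap\RR = \{0\}$.

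For the rate, the plan is to return to the proof of Theorem~\ref{KT_zeta} with this specific $\mu$ and make each of the estimates quantitative. For the $G$-term: the measure $\mu = e - \delta_0$ has finite first absolute moment, and since $\psi = \F^{-1}\varphi \in \mathcal{S}(\RR)$, the same is true of $\xi = \mu * (\delta_0 - \psi)$. The identity \eqref{G_formula} together with the uniform bound \eqref{G_bd} then gives the explicit estimate $\|G(t)\| \leq C/t$ for some $C>0$ and all $t>0$. Since $m(s)\geq 1/s$ implies $m^{-1}(y)\geq 1/y$, we have $1/t \leq 1/t^{1-\varepsilon} \leq m^{-1}(t^{1-\varepsilon})$ for $t\geq 1$, so $G$ already contributes at most the desired rate.

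The main work lies with $F$. Starting from the Fourier representation
$$F(t) = \frac{1}{2\pi}\int_\RR \e^{\ii st}\varphi(-s)\frac{\ii s}{1-\ii s}R(\ii s, A)\,\dd s$$
derived in the proof of Theorem~\ref{KT_zeta}, I would introduce a smooth cutoff $\chi_\delta \in C^\infty(\RR)$ with $\chi_\delta = 0$ on $|s|\leq\delta$ and $\chi_\delta = 1$ on $|s|\geq 2\delta$, for a scale $\delta\in(0,1)$ to be chosen, and split $F = F_1 + F_2$ according to $1-\chi_\delta$ and $\chi_\delta$ respectively. For the high-frequency piece $F_2$, the integrand is smooth and compactly supported in $\delta\leq|s|\leq R_0$, where $\|R(\ii s,A)\| \leq m(\delta)$ by monotonicity of $m$. $N$-fold integration by parts, exploiting the estimates $\|\tfrac{d^k}{ds^k} R(\ii s,A)\|\leq k!\,m(|s|)^{k+1}$ obtained by differentiating the resolvent equation, together with the fact that the derivatives of $\chi_\delta$ scale as $\delta^{-k}$, yields $\|F_2(t)\| \leq C_N\,m(\delta)^{N+1}/t^N$. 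Setting $\delta = m^{-1}(t^{1-\varepsilon})$ (so that $m(\delta) = t^{1-\varepsilon}$) and taking $N$ sufficiently large in terms of $\varepsilon$ then brings $\|F_2(t)\|$ under $O(m^{-1}(t^{1-\varepsilon}))$.

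The decisive obstacle is the low-frequency piece $F_1$, supported in $|s|\leq 2\delta$ where $\|R(\ii s, A)\|$ may be as large as $m(\delta)$. The naive bound $\|F_1(t)\|\leq C\int_0^{2\delta} s\,m(s)\,\dd s$, obtained from $|(\F\mu)(-s)|\leq|s|$ and $\|R(\ii s, A)\|\leq m(|s|)$, is insufficient when $m$ grows faster than $s^{-1}$. To achieve the required rate one should perform an additional integration by parts on $F_1$, exploiting both the first-order vanishing of $\ii s/(1-\ii s)$ at $s=0$ and the asymptotic analyticity of $T$: Theorem~\ref{zeta_FM} provides an $\Ls(\RR;\B(X))$-representation $\chi_\delta(s) R(\ii s, A) = (\F S_\delta)(s)$, from which the contribution of $R(\ii s, A)$ near the origin can be converted into a time-domain convolution with a kernel whose norm is controlled, in a quantifiable way, by $m(\delta)$. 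Adding the resulting estimates and optimising in $\delta$ and $N$ then gives $\|F(t)\| = O(m^{-1}(t^{1-\varepsilon}))$, completing the proof.
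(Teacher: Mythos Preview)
Your treatment of $G$ is correct and matches the paper exactly: from \eqref{G_formula} and \eqref{G_bd} together with the finiteness of $\int_{(-\infty,0)}|r|\,\dd|\xi|(r)$ one gets $\|G(t)\|=O(t^{-1})$, which is absorbed by $m^{-1}(t^{1-\varepsilon})$. Your treatment of $F_2$ by $N$-fold integration by parts and the choice $\delta=m^{-1}(t^{1-\varepsilon})$ is also the right idea and yields precisely the term $t^{1-\varepsilon(N+1)}$ that the paper records in \eqref{F_bd}.

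The gap is in the low-frequency piece $F_1$. Your proposed rescue via Theorem~\ref{zeta_FM} cannot work: that theorem only represents $\phi(s)R(\ii s,A)$ as a Fourier transform for \emph{compatible} $\phi$, i.e.\ for $\phi$ vanishing on an interval containing $\ii\sigma(A)\cap\RR=\{0\}$. It therefore says nothing about $R(\ii s,A)$ for $|s|\leq\delta$, and the map $s\mapsto(1-\chi_\delta(s))R(\ii s,A)$ is not the Fourier transform of anything in $\Ls(\RR;\B(X))$ when $m$ grows faster than $s^{-1}$. In fact your very Fourier representation of $F(t)$ need not exist as a Bochner integral in this regime, as your own ``naive bound'' computation shows.

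The correct argument, which the paper imports from \cite[Theorem~6.15]{BCT}, avoids the resolvent near $0$ altogether by going back to the time domain. One writes $a=a_\delta+(a-a_\delta)$ with $\F a_\delta=\chi_\delta\cdot\F a$, so that $(\F(a-a_\delta))(s)=(1-\chi_\delta(s))(\F\mu)(s)\varphi(s)$ is a Schwartz function supported in $|s|\leq 2\delta$ that vanishes to first order at $0$; a standard scaling argument then gives $\|a-a_\delta\|_{L^1(\RR)}=O(\delta)$. Since $F_1(t)=\int_\RR(a-a_\delta)(s)T(t+s)\,\dd s$, boundedness of $T$ alone yields
\[
\|F_1(t)\|\leq M\|a-a_\delta\|_1=O(\delta)=O\big(m^{-1}(t^{1-\varepsilon})\big).
\]
Asymptotic analyticity is used only for $G$; for $F$ the estimate is purely classical. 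Combining this with your correct bound on $F_2$ and choosing $N$ with $\varepsilon(N+1)\geq 2$ gives \eqref{rate}.
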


\begin{proof}
Since  the set $\{0\}$ is of spectral synthesis, the first statement follows immediately from Theorem~\ref{KT_zeta} applied to the bounded Borel measure on $\RR_+$ given by $\mu=e-\delta_0$, whose Fourier transform is given by $$(\F{\mu})(s)=-\frac{\ii s}{1+\ii s}.$$ 

The quantified statement follows from a modification of the argument given in the proof of Theorem~\ref{KT_zeta}.  Fix a function $\varphi\in C_\mathrm{c}^\infty(\mathbb{R})$ such that $\varphi(s)=1$ for $|s|\leq2$ and let $\psi\in \mathcal{S}(\RR)$ be given by $\psi=\F^{-1}\varphi$. As in the proof of Theorem~\ref{KT_zeta}, let the bounded Borel measures $\nu$ and $\xi$ on $\mathbb{R}$ be defined by $\nu=\mu*\psi$ and $\xi=\mu*(\delta_0-\psi)$. Then $\mu=\nu+\xi$ and $T(t)\widehat{\mu}(T)=F(t)+G(t)$ for all $t\geq0$, where  $F,G:\mathbb{R}\rightarrow \B(X)$ are as defined in \eqref{F,G}.  As before, the measure $\nu$ is absolutely continuous with respect to Lebesgue measure on $\RR$ and hence there exists a function $a\in L^1(\RR)$ such that $\dd\nu(t)=a(t)\dd t$. In fact,  
\begin{equation}\label{a_formula}
a(t)=\int_0^\infty\psi(t-s)\e^{-s}\,\dd s-\psi(t)
\end{equation}
 for almost all $t\in \RR$. As in the proof of \cite[Theorem~6.15]{BCT}, a refinement of the argument used in the proof of Theorem~\ref{KT_zeta} to show that $\|F(t)\|\to0$ as $t\to\infty$  in fact gives that, for any integer $k\geq1$, \begin{equation}\label{F_bd}\|F(t)\|=O\left(\frac{1}{t^{\varepsilon(k+1)-1}}+m^{-1}(t^{1-\varepsilon})\right)\end{equation} as $t\to \infty$. 
 
 In order to obtain an estimate on $\|G(t)\|$ as $t\to\infty$, note first that, since $\mu$ is supported in $\RR_+$, $\xi$ coincides with  $-\nu$ on $(-\infty,0)$. In particular, letting $M=\sup\{\|T(t)\|:t\geq0\}$, $$\left\|\int_{[-t,0)}rT(t+r)\,\dd\xi(r)\right\|\leq M \int_{(-\infty,0)}|r| \,\dd|\xi|(r)$$ for all $t\geq0$ and hence by \eqref{a_formula} $$\sup_{t\geq0}\left\|\int_{[-t,0)}rT(t+r)\,\dd\xi(r)\right\|\leq M \int_{-\infty}^0|r|\left|\psi(r)-\int_0^\infty \psi(r-s)\e^{-s}\,\dd s\right|\,\dd r<\infty.$$ It follows from \eqref{G_formula} and \eqref{G_bd} that \begin{equation}\label{G_t}\|G(t)\|=O(t^{-1})\end{equation}
as $t\to\infty$.  To conclude the proof, choose $k\geq1$ in \eqref{F_bd} so  that $\varepsilon(k+1)\geq2$.  Since $m^{-1}(t^{1-\varepsilon})\geq t^{\varepsilon-1}\geq t^{-1}$ for $t\geq 1$, it follows from \eqref{F_bd} and \eqref{G_t} that \eqref{rate} holds.
\end{proof}

\begin{rem}
 From the point of view of decay rates  the case $\ii\sigma(A)\cap\RR=\emptyset$, which could be dealt with in a similar way, is of limited interest in Theorem~\ref{KT_zeta_rates}. Indeed,  \cite[Proposition~2.4]{BBS} shows that $\omega_0(T)=\max\{\zeta(T),s(A)\}$, and hence the rate of decay in such cases is necessarily at least exponential if $T$ is asymptotically analytic.
\end{rem}

\section{Concluding remarks}\label{sec:rem}

This final section contains some remarks and open questions. The first concerns the rate of decay obtained in Theorem~\ref{KT_zeta_rates}, the rest relate to the property of asymptotic analyticity of a $C_0$-semigroup $T$. Throughout, $X$ will be a complex Banach space and $T$ a bounded $C_0$-semigroup on $X$ with generator~$A$.

\subsection{Non-optimality of the rate of decay}

Theorem~\ref{normal_cont} shows that one cannot in general hope for an upper bound in terms of $m^{-1}(C t)$ for any constant $C>0$ in \eqref{rate}.  In fact, \cite[Example~6.16]{BCT} shows that the estimate in \eqref{rate} can be optimal when $m(s)$ grows very rapidly as $s\to0+$ but that it is not optimal in general. In the special case where $m(s)=Cs^{-\alpha}$ for some $C>0$ and some $\alpha\geq1$, it is shown by a completely different method in \cite[Theorem~7.6]{BCT} that the right-hand side of \eqref{rate} can be replaced by the optimal $O(t^{-1/\alpha})$ when $X$ is a Hilbert space. Results such as \cite[Theorem~1.5]{BD}, \cite[Theorem~4.1]{BT} and \cite[Proposition~3.1]{Mar} (see also \cite{Seifert2} for analogous results in the discrete setting) suggest that for general Banach spaces it might be possible to obtain an estimate differing only by a logarithmic factor from this polynomial rate of decay rather than having the wrong power as in \eqref{rate}. It remains an open question whether this is indeed the case.

\subsection{Necessity of asymptotic analyticity}

 Theorem~\ref{res_bd} and a simple Neumann series argument show that \eqref{decay} implies  $s_0^\infty(A)<0$. If $X$ is a Hilbert space, it follows from Theorem~\ref{NA_GP} that $T$ is asymptotically analytic whenever \eqref{decay} holds. The same conclusion holds if  $T$ is hyperbolic (see for instance \cite[Section~V.1.c]{EN} for background), since in this case $T$ is asymptotically analytic if and only if $s_0^\infty(A)<0$, by \cite[Proposition~3.5.2]{DrBlake}. It remains unknown whether  \eqref{decay} implies asymptotic analyticity for general  $C_0$-semigroups acting on arbitrary Banach spaces.

This question is related to a further open question which concerns the non-analytic growth bound $\zeta(T)$ and another growth bound associated with $T$. Indeed, allowing the function $S$ in \eqref{zeta_def} to vary over all functions $S:(0,\infty)\to\B(X)$ which are continuous with respect to the norm topology on $\B(X)$ gives the definition of  $\delta(T)$, which is referred to sometimes as the \textsl{critical growth bound}. It is clear from the definitions that $\delta(T)\leq \zeta(T)$.  Furthermore, $s_0^\infty(A)\leq\delta(T)$ so that, if $X$ is a Hilbert space,  $\delta(T)=\zeta(T)=s_0^\infty(A)$ by Theorem~\ref{NA_GP}. These results along with other cases in which $\delta(T)$ and $\zeta(T)$  coincide are contained in \cite[Section~5]{BBS}, but it remains open whether this is always the case. The following result shows that if it were known that $\zeta(T)=\delta(T)$ at least when $\ii\sigma(A)\cap\mathbb{R}=\{0\}$, then  asymptotic analyticity would be not only sufficient but also necessary  for \eqref{decay} to hold in this case. Since $s_0^\infty(A)\leq\delta(T)$, the result strengthens  the above conclusion  derived from  Theorem~\ref{res_bd} that $s_0^\infty(A)<0$.

\begin{prp}\label{delta}
 Let $X$ be a complex Banach space and let $T$ be a bounded $C_0$-semigroup on $X$ with generator $A$. Suppose that  \eqref{decay} holds. Then $\delta(T)<0$. 
\end{prp}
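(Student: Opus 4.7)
The plan is to verify $\delta(T)<0$ by explicitly constructing a norm-continuous function $S:(0,\infty)\to\B(X)$ such that $\|T(t)-S(t)\|=O(\e^{-\varepsilon t})$ for some $\varepsilon>0$. The key tool will be an iterated version of the trivial operator identity $I=R(1,A)-AR(1,A)$, together with the observation that $B:=AR(1,A)$ commutes with $T(s)$ for every $s\geq0$.

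First, I would establish the algebraic identity $I=R(1,A)P_n+(-1)^nB^n$ for every $n\geq1$, where $P_n=\sum_{k=0}^{n-1}(-B)^k$; this is a routine telescoping computation since $R(1,A)=I+B$. Applying $T(t)$ to both sides and setting $S_n(t):=T(t)R(1,A)P_n$ yields $T(t)-S_n(t)=(-1)^nT(t)B^n$. Each $S_n$ is norm-Lipschitz in $t$ because $\frac{\dd}{\dd s}T(s)R(1,A)=T(s)B$ is uniformly bounded by $M\|B\|$ on $[0,\infty)$, where $M=\sup_{t\geq0}\|T(t)\|$. The commutation of $B$ with the semigroup then gives $T(t)B^n=(T(t/n)B)^n$, whence $\|T(t)B^n\|\leq\omega(t/n)^n$ with $\omega(s):=\|T(s)B\|$; by the hypothesis \eqref{decay}, $\omega(s)\to0$ as $s\to\infty$.

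The main new ingredient is to let $n$ depend on $t$ in order to convert this slow pointwise decay into genuine exponential decay. Fix $t_0>0$ large enough that $\omega(s)\leq1/2$ for all $s\geq t_0/2$. Then for $n\geq1$ and $t\in[nt_0,(n+1)t_0]$ one has $t/n\geq t_0$ and $t/(n+1)\geq t_0/2$, so both $\|T(t)B^n\|$ and $\|T(t)B^{n+1}\|$ are bounded by $2^{-n}$. I would then assemble the $S_n$ into a single norm-continuous function by piecewise linear interpolation: set $S(t)=(1-\chi_n(t))S_n(t)+\chi_n(t)S_{n+1}(t)$ for $t\in[nt_0,(n+1)t_0]$ with $\chi_n(t)=(t-nt_0)/t_0$, and $S(t)=S_1(t)$ on $(0,t_0]$. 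The agreement of left and right limits at each joint together with the norm-Lipschitz continuity of each $S_n$ shows that $S:(0,\infty)\to\B(X)$ is norm-continuous, while the triangle inequality yields $\|T(t)-S(t)\|\leq2^{-n}+2^{-(n+1)}\leq4\cdot2^{-t/t_0}$ for $t\geq t_0$. Hence $\omega_0(T-S)\leq-(\log2)/t_0<0$, so $\delta(T)<0$.

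The principal conceptual obstacle is that the hypothesis supplies only $\omega(t)\to0$, which may decay arbitrarily slowly, whereas $\delta(T)<0$ demands exponential decay of $\|T(t)-S(t)\|$. The identity $T(t)B^n=(T(t/n)B)^n$ is the pivot that bridges this gap, trading slow pointwise decay for geometric decay upon raising to the $n$-th power; the ensuing $t$-dependence of the optimal $n$ is the only real subtlety and is handled cleanly by the linear interpolation, which preserves norm continuity without spoiling the exponential estimate.
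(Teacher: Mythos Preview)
Your proof is correct and takes a genuinely different route from the paper's. The paper argues indirectly via the function $D(t)=\limsup_{h\to0+}\|T(t+h)-T(t)\|$: writing $T(t)=T(t)R(1,A)-T(t)AR(1,A)$ and noting that the first summand is norm-continuous, one obtains $D(t)\leq D(0)\|T(t)AR(1,A)\|\to0$, and then two cited facts---submultiplicativity of $D$ \cite[Proposition~3.5]{Blake} and the identity $\delta(T)=\omega_0(D)$ \cite[Proposition~5.1]{BBS}---finish the job. Your argument, by contrast, is self-contained: you construct an explicit norm-continuous approximant $S$ by telescoping $I=R(1,A)P_n+(-1)^nB^n$, exploit the identity $T(t)B^n=(T(t/n)B)^n$ to convert pointwise decay of $\|T(s)B\|$ into geometric decay in $n$, and glue the approximants $S_n$ together by interpolation. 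The underlying mechanism is really the same---the semigroup property turns convergence to zero into exponential decay---but the paper delegates this to the submultiplicativity of $D$, whereas you carry it out by hand. Your approach buys independence from the references; the paper's buys brevity and places the result more transparently within the existing theory of the critical growth bound.
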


\begin{proof}
Note that, for $t\geq0$, $$T(t)=T(t)R(1,A)-T(t)AR(1,A).$$ The first term on the right-hand side is continuous in $t$ with respect to the norm topology on $\B(X)$. Letting $$D(t)=\limsup_{h\to0+}\|T(t+h)-T(t)\|$$ for $t\geq0$, it follows that $$D(t)\leq \limsup_{h\to0+}\|(T(t+h)-T(t))AR(1,A)\|\leq D(0)\|T(t)AR(1,A)\|,$$ and hence $D(t)\to0$ as $t\to\infty$ by the assumption that \eqref{decay} holds. Since $D$ is submultiplicative by \cite[Proposition~3.5]{Blake}, it follows that $\omega_0(D)<0$. But by \cite[Proposition~5.1]{BBS} $\omega_0(D)=\delta(T)$, which gives the result.
\end{proof}

Combining this observation with  \cite[Corollary~4.5.6]{DrBlake}, it follows that any bounded $C_0$-semigroup $T$ which satisfies \eqref{decay} and whose generator $A$ satisfies $\ii\sigma(A)\cap(\RR+\ii\omega)=\emptyset$ for some $\omega\in(\delta(T),0)$ must be asymptotically analytic. It follows from Theorem~\ref{res_bd} that this is true in particular if $\{0\}$ is an isolated point of $\sigma(A)$.

\subsection{Towards an alternative characterisation of asymptotic analyticity}
 
The following result is proved in \cite[Theorem~3.6]{Blake}; see also \cite[Proposition~4.3]{NaPo00} and \cite{AMM91}. Here, given $\omega\in\RR$, $\Omega_\omega$ denotes the set $\{\lambda\in\CC:|\lambda|>\e^{\omega }\}$.

\begin{thm}\label{SMT}
Let $X$ be a complex Banach space and let $T$ be a $C_0$-semigroup on $X$ with generator $A$. Then $$\sigma(T(t))\cap\Omega_{t\delta(T)}=\exp(t\sigma(A))\cap\Omega_{t\delta(T)}$$ for all $t>0$.
\end{thm}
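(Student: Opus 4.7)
I would prove the two inclusions separately. The inclusion $\exp(t\sigma(A))\cap\Omega_{t\delta(T)}\subseteq\sigma(T(t))\cap\Omega_{t\delta(T)}$ requires no hypothesis on $\delta(T)$: it is the classical spectral inclusion theorem for $C_0$-semigroups, intersected with $\Omega_{t\delta(T)}$. It follows from the factorisation $e^{t\lambda}-T(t)=(\lambda-A)\int_0^t e^{(t-s)\lambda}T(s)\,\dd s$ on $D(A)$, together with the analogous identity on the range, which ensures that whenever $\lambda\in\sigma(A)$ the point $e^{t\lambda}$ lies in $\sigma(T(t))$.

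For the reverse inclusion $\sigma(T(t))\cap\Omega_{t\delta(T)}\subseteq\exp(t\sigma(A))\cap\Omega_{t\delta(T)}$, the plan is to exploit the characterisation $\delta(T)=\omega_0(D)$ from \cite[Proposition~5.1]{BBS}, where $D(t)=\limsup_{h\to 0+}\|T(t+h)-T(t)\|$ is the submultiplicative function of \cite[Proposition~3.5]{Blake}; this yields the spectral-radius identity $r(D(t))=e^{t\delta(T)}$ for every $t>0$. Fix $t_0>0$ and $\mu\in\sigma(T(t_0))$ with $|\mu|>e^{t_0\delta(T)}=r(D(t_0))$, and aim to produce $\lambda\in\sigma(A)$ with $e^{t_0\lambda}=\mu$. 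The natural strategy is to pass to a quotient algebra $\mathcal{A}=\B(X)/\mathcal{I}$ by a closed two-sided ideal $\mathcal{I}$ chosen so that the image $\overline{T}$ of $T$ in $\mathcal{A}$ is norm-continuous in $t$ (so that the classical spectral mapping theorem for norm-continuous semigroups applies to $\overline{T}$) and so that the generator $\overline{A}$ inherits $\sigma(\overline{A})\subseteq\sigma(A)$. Because $|\mu|>r(D(t_0))$, the obstruction to inverting $\mu-T(t_0)$ coming from the non-norm-continuity of $T$ vanishes at scale $|\mu|$, and invertibility of $\mu-T(t_0)$ in $\B(X)$ becomes equivalent to invertibility of its image in $\mathcal{A}$; failure of the latter then forces $\mu\in\exp(t_0\sigma(\overline{A}))\subseteq\exp(t_0\sigma(A))$, as desired.

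The hard step is the construction of the ideal $\mathcal{I}$, together with the verification that passage to the quotient preserves spectrum outside the disk of radius $e^{t_0\delta(T)}$ while killing the non-norm-continuous part inside it. One realisation would model $\mathcal{I}$ on the Calkin ideal, replacing compact operators by a larger class reflecting the defect measured by $D$. A more direct route, probably closer to Blake's original argument, is to avoid the quotient altogether and construct $R(\mu,T(t_0))$ by hand via a Cauchy-type integral $\frac{1}{2\pi\ii}\int_\Gamma(\mu-e^{t_0\lambda})^{-1}R(\lambda,A)\,\dd\lambda$ along a contour $\Gamma$ avoiding $\sigma(A)$, using the approximation of $T$ by norm-continuous functions afforded by the definition of $\delta(T)$ to control the tails of $\Gamma$ and to verify that the resulting operator genuinely inverts $\mu-T(t_0)$. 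Either implementation requires care to ensure that the residual perturbation has spectral radius strictly below $|\mu|$, and this is where I expect the main technical work to lie.
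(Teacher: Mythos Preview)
The paper does not give its own proof of this theorem: it is quoted from the literature, with the attribution ``proved in \cite[Theorem~3.6]{Blake}; see also \cite[Proposition~4.3]{NaPo00} and \cite{AMM91}'' immediately preceding the statement. There is therefore no in-paper argument to compare your proposal against.

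That said, your sketch is a reasonable outline of how such results are established. The easy inclusion is exactly as you describe. For the hard inclusion, your quotient-algebra idea is in spirit the approach of Nagel and Poland \cite{NaPo00}: they pass to the quotient of $\ell^\infty(X)$ by the closed subspace on which the extended semigroup acts strongly continuously, obtaining a norm-continuous quotient semigroup whose spectral radius is precisely $e^{t\delta(T)}$; the spectral mapping theorem for norm-continuous semigroups then handles everything outside the disk of that radius. The ideal you are looking for is thus not a Calkin-type ideal in $\B(X)$ but rather arises from this $\ell^\infty$-construction. Your alternative contour-integral route is closer to the arguments in \cite{Blake} and \cite{AMM91}. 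Either way, the technical work you flag as missing is genuinely substantial and is the content of those references; your proposal correctly identifies the shape of the argument but does not fill in the key step.
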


 Now, letting $$\begin{aligned}s^\infty(A)=\inf\big\{&\alpha\in\RR: Q_{\alpha,\beta}\subset\rho(A)\;\mbox{for some $\beta\geq0$}\big\}, \end{aligned}$$ it is clear that $s^\infty(A)\leq s_0^\infty(A)$ and hence $s^\infty(A)\leq \zeta(T)$. It follows from Theorem~\ref{SMT} that $-1\in\rho(T(t))$ for all sufficiently small $t>0$ if $T$ is asymptotically analytic, since in this case $\delta(T)<0$. The following partial converse is proved in \cite[Sections~4.3 and 4.5]{DrBlake}; see also \cite{BBN01}.

\begin{thm}\label{SIT}
Let $X$ be a complex Banach space and let $T$ be a bounded $C_0$-semigroup on $X$ with generator $A$. Suppose there exists a non-null set $I\subset\RR_+$ such that $-1\in\rho(T(t))$ for all $t\in I$. Then $s_0^\infty(A)<0$. 
\end{thm}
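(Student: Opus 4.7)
The plan is to combine a measure-theoretic reduction with the identity
\begin{equation*}
R(\lambda,A)=(I-\e^{-\lambda t}T(t))^{-1}\int_0^t\e^{-\lambda s}T(s)\,\dd s,
\end{equation*}
valid by summing a geometric series for $\R\lambda$ sufficiently large and extending analytically to any $\lambda$ for which $I-\e^{-\lambda t}T(t)$ is invertible. The goal is to exploit the hypothesis to obtain a uniform resolvent bound on some set $Q_{-\varepsilon,\beta}$ with $\varepsilon>0$, which gives $s_0^\infty(A)\leq-\varepsilon<0$.

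First I would decompose $I$ according to the norm of the inverse: for each integer $n\geq 1$, set $I_n=\{t\in I:\|(I+T(t))^{-1}\|\leq n\}$. A routine argument using the strong continuity of $T$ (and, if necessary, a reduction to a closed separable invariant subspace to ensure a countable supremum suffices) shows that each $I_n$ is Borel measurable, so by countable additivity some $I_n$ has positive Lebesgue measure; the Lebesgue density theorem then produces a density point $t_0>0$ of $I_n$. For any $\eta\in(0,1)$ there is consequently $\delta>0$ with $|I_n\cap(t_0-\delta,t_0+\delta)|\geq(1-\eta)\cdot 2\delta$. For $t\in I_n$ and $k\in\ZZ$ the point $\lambda_{k,t}=\ii(2k+1)\pi/t$ satisfies $\e^{\lambda_{k,t}t}=-1\in\rho(T(t))$, so the identity above combined with $(I-\e^{-\lambda_{k,t}t}T(t))^{-1}=(I+T(t))^{-1}$ yields $\|R(\lambda_{k,t},A)\|\leq nM(t_0+\delta)$, where $M=\sup_{s\geq0}\|T(s)\|$. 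A Neumann series perturbation in $\lambda$ then shows that $R(\lambda,A)$ exists and is bounded by $2nM(t_0+\delta)$ throughout a rectangle about $\lambda_{k,t}$ of horizontal extent of order $(2Mnt_0)^{-1}$ and of $t$-width of order $(Mns)^{-1}$ about the centre $(2k+1)\pi/s$.

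The crux of the argument, and the main obstacle, is a geometric covering step: for every $s\in\RR$ with $|s|$ sufficiently large, one must produce $t\in I_n\cap(t_0-\delta,t_0+\delta)$ and $k\in\ZZ$ such that $\ii s$ lies in the admissible rectangle about $\lambda_{k,t}$. The candidate centres $(2k+1)\pi/s$ form an arithmetic progression of step $2\pi/s$ in the $t$-coordinate, so the union of the admissible $t$-windows inside $(t_0-\delta,t_0+\delta)$ has density approximately $(2\pi Mn)^{-1}$, \emph{independently} of $s$. Choosing $\eta<(2\pi Mn)^{-1}$ and the corresponding $\delta$ from the density point property forces this union to meet $I_n$ for each sufficiently large $|s|$, providing the required $t$ and $k$. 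Combining this with the horizontal extent of the Neumann rectangle then yields $Q_{-\varepsilon,\beta}\subset\rho(A)$ and $\sup_{\lambda\in Q_{-\varepsilon,\beta}}\|R(\lambda,A)\|<\infty$ for suitable $\varepsilon>0$ and $\beta\geq0$, so that $s_0^\infty(A)\leq-\varepsilon<0$ as claimed. The delicate point is calibrating $\eta$, $\delta$ and the Neumann radius simultaneously so that the covering holds uniformly in $|s|$; this is precisely where the arguments in \cite[Sections~4.3 and 4.5]{DrBlake} and \cite{BBN01} do the technical work.
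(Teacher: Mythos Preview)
The paper does not supply its own proof of this theorem: it is stated with the bare attribution ``proved in \cite[Sections~4.3 and 4.5]{DrBlake}; see also \cite{BBN01}'' and no argument is given in the text. There is therefore nothing in the paper against which to compare your proposal line by line.

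That said, your sketch is a faithful outline of the standard route to this result and is essentially what one finds in the cited sources: the Laplace--geometric identity for $R(\lambda,A)$ in terms of $(I-\e^{-\lambda t}T(t))^{-1}$, the decomposition $I=\bigcup_n I_n$ by the norm of $(I+T(t))^{-1}$, passage to a Lebesgue density point of some $I_n$, and the arithmetic-progression covering of the imaginary axis by the points $\ii(2k+1)\pi/t$ together with a Neumann perturbation. Your identification of the crux---that the admissible $t$-windows have density of order $(2\pi M n)^{-1}$ in the interval around $t_0$ \emph{independently of $|s|$}, so that a sufficiently small $\eta$ at the density-point stage forces a hit---is exactly the mechanism that drives the argument, and you are right that calibrating $\eta$, $\delta$ and the Neumann radius uniformly in $|s|$ is where the cited references do the honest work. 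One minor point worth tightening if you write this out in full: the measurability of $I_n$ is not quite ``routine'' from strong continuity alone, since $t\mapsto T(t)$ need not be norm-continuous; the cleanest way is to note that $t\mapsto\|(I+T(t))^{-1}\|$ is lower semicontinuous on the (open) set where $-1\in\rho(T(t))$, which already makes each $I_n$ Borel without any separable reduction.
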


In particular, when combined with Theorem~\ref{NA_GP} this shows that if $T$ is a bounded $C_0$-semigroup and $X$ is a Hilbert space, then $T$ is asymptotically analytic if and only if $-1\in\rho(T(t))$ for all sufficiently small $t>0$. On a general Banach space, no such characterisation is known. Nevertheless, Theorems~\ref{SMT} and \ref{SIT} suggest at least loosely that the property of asymptotic analyticity of a $C_0$-semigroup $T$ is connected with some notion of regularity of the operators $T(t)$ for small $t>0$ relating to the location of the spectrum of these operators. It remains open to what extent this is indeed the case.

\section*{Acknowledgements}

The author would like to express his thanks to Professor C.J.K.\ Batty for his guidance and an observation that led to Proposition~\ref{delta}, and to the anonymous referee for his or her helpful remarks.

\bibliographystyle{plain}

\end{document}